\numberwithin{equation}{section}
\newtheorem{theorem}{Theorem}[section]
\newtheorem{lemma}[theorem]{Lemma}
\newtheorem{corollary}[theorem]{Corollary}
\newtheorem{conjecture}{Conjecture}
\newcommand{\RNum}[1]{\uppercase\expandafter{\romannumeral #1\relax}}
\DeclareMathOperator{\disc}{disc}
\DeclareMathOperator{\Gal}{Gal}
\DeclareMathOperator{\ind}{ind}
\DeclareMathOperator{\Disc}{Disc}
\DeclareMathOperator{\Nm}{Nm}
\begin{document}

\title{Secondary Term of Asymptotic Distribution of $S_3\times A$ Extensions over $\mathbb{Q}$}
\author{Jiuya Wang}
\newcommand{\Addresses}{{
		\bigskip
		\footnotesize		
		Jiuya Wang, \textsc{Department of Mathematics, University of Wisconsin-Madison, 480 Lincoln Dr., Madison, WI 53706, USA
		}\par\nopagebreak
		\textit{E-mail address}: \texttt{jiuyawang@math.wisc.edu}
	}}
\maketitle	
	\begin{abstract}	
           We combine a sieve method together with good uniformity estimates to prove a secondary term for the asymptotic estimate of $S_3\times A$ extensions over $\mathbb{Q}$ when $A$ is an odd abelian group with minimal prime divisor greater than $5$. At the same time, we prove the existence of a power saving error when $A$ is any odd abelian group.
	\end{abstract}
	
\bf Key words. \normalfont Malle's conjecture, secondary term, uniformity estimates, power saving error.
\pagenumbering{arabic}	
\section{Introduction}
Davenport and Heilbronn \cite{DH71} proved a celebrated theorem on the asymptotic distribution of $S_3$ cubic extensions over $\mathbb{Q}$, which gives the average $3$-class number of quadratic extensions. This is the first and only proven case for Cohen-Lenstra heuristics over quadratic fields up till now. This work was generalized by Datskovsky and Wright \cite{DW88} to any global field with characteristic not equal to $2$ or $3$, along with the average class number result. Given $G\subset S_n$ a permutation group, denote $N_k(G, X)$ to be the number of extensions over $k$ with $\Gal(K/k)$ isomorphic to $G$ as a permutation group and with the absolute discriminant bounded by $X$. Then their results state as following:
\begin{theorem}[\cite{DH71,DW88}]\label{S3}
	There exist a constant $C$ such that 
	$$ N_k(S_3, X)  \sim CX,$$
	where $k$ is any global field with characteristic not equal to $2$ or $3$. 
\end{theorem}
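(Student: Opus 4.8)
The plan is to follow the strategy of Davenport--Heilbronn over $\mathbb{Q}$ together with its adelic generalization by Datskovsky--Wright, both of which rest on the Delone--Faddeev parametrization of cubic rings. First I would recall that isomorphism classes of cubic rings over $\mathcal{O}_k$ are in bijection with $\mathrm{GL}_2(\mathcal{O}_k)$-equivalence classes of integral binary cubic forms, in such a way that the discriminant of the ring matches, up to a unit, the discriminant of the form, and that rings of integers of $S_3$ (or $C_3$) cubic extensions correspond precisely to the \emph{maximal} forms --- those that are, locally at every prime $\mathfrak{p}$, not ``divisible by $\mathfrak{p}^2$'' in the appropriate sense. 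Over a general global field of characteristic not $2$ or $3$ one replaces this with the adelic picture: orbits of $\mathrm{GL}_2(\mathbb{A}_k)$ acting on the space $V = \mathrm{Sym}^3(k^2)$ of binary cubic forms, with local conditions at each place encoding integrality and, eventually, maximality; counting $N_k(S_3,X)$ becomes counting such orbits with $|\disc(x)| \le X$ and appropriate splitting conditions at the archimedean and finitely many bad places.

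The analytic core is the global zeta function of the prehomogeneous vector space $(\mathrm{GL}_2, \mathrm{Sym}^3)$ over $k$, essentially $\xi(s) = \sum_{x} |\disc(x)|^{-s}$ summed over a set of representatives for the relevant orbits (with suitable local weights). Following Shintani's method --- Poisson summation on the lattice of forms, folded against a truncated fundamental domain for the $\mathrm{GL}_2$-action --- one establishes the meromorphic continuation of $\xi(s)$ to $\mathbb{C}$ and a functional equation exchanging $s$ with $1-s$ after the standard normalization, and one shows that the rightmost pole is simple and sits at the point corresponding to the linear growth in $X$, with residue computable as $\zeta_k(1)$-type factor times a convergent product of local orbital densities; this residue is positive and produces the constant $C$.

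With the zeta function of \emph{all} cubic rings in hand, the passage to cubic fields is a sieve. By multiplicativity of the local conditions, the Dirichlet series counting maximal forms is obtained from the one counting all forms by an Euler-product correction $\prod_{\mathfrak{p}}(\text{local maximality factor})$, which is absolutely convergent and holomorphic slightly to the left of the main pole; equivalently, one runs an inclusion--exclusion over the finite sets of primes at which a form fails to be maximal. Feeding the resulting Dirichlet series into a standard Tauberian theorem (Ikehara--Delange) then yields $N_k(S_3, X) \sim C X$, with the distinction between $S_3$, $C_3$ and all cubic \'etale algebras being bookkeeping over which orbits have full monodromy $S_3$.

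The main obstacle is the meromorphic continuation of $\xi(s)$: the defining integral over $\mathrm{GL}_2(k)\backslash \mathrm{GL}_2(\mathbb{A}_k)$ does not converge, because of the contribution of the singular locus --- forms of vanishing discriminant, and reducible forms --- so one must truncate the fundamental domain, recognize the divergent boundary terms as lower-rank (``$\mathrm{GL}_1$-type'') zeta integrals, and continue those separately; this is exactly where the exclusion of characteristics $2$ and $3$ is forced, and where one must be careful that the boundary contributions do not interfere with the location or residue of the rightmost pole. A secondary difficulty is making the maximality sieve effective: one needs an upper bound, uniform in $\mathfrak{p}$ with a power-type decay in $\mathrm{Nm}(\mathfrak{p})$, for the number of $\mathrm{GL}_2(\mathcal{O}_k)$-classes of forms of discriminant up to $X$ that are non-maximal at $\mathfrak{p}$, so that the tail of the inclusion--exclusion is negligible; over $\mathbb{Q}$ this is Davenport--Heilbronn's count of binary cubic forms in a (cusp-containing) fundamental domain via Davenport's lemma, and over general $k$ it is supplied by the uniformity estimates inherent in the adelic orbit count.
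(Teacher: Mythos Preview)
The paper does not supply its own proof of this statement: Theorem~\ref{S3} is quoted as a known result from \cite{DH71,DW88} and is used only as background to motivate the later work on secondary terms. There is therefore nothing in the paper to compare your argument against.

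That said, your sketch is a faithful outline of the Datskovsky--Wright approach: the Delone--Faddeev/Levi parametrization of cubic rings by binary cubic forms, the Shintani zeta function of the prehomogeneous vector space $(\mathrm{GL}_2,\mathrm{Sym}^3)$ and its meromorphic continuation with a simple rightmost pole, the local sieve to pass from all cubic rings to maximal orders, and a Tauberian argument to extract the asymptotic. The two caveats you flag---handling the singular and reducible loci in the adelic zeta integral, and proving uniform-in-$\mathfrak{p}$ tail bounds for the non-maximality sieve---are exactly the technical points where the work lies in \cite{DW88}. If you were to flesh this out, you would want to be explicit that over a general global field the ``Davenport's lemma'' lattice-point count is replaced by the adelic orbital integral machinery, and that the exclusion of characteristics $2$ and $3$ enters both in the structure of the prehomogeneous space (wild ramification at those primes) and in the local density computations.
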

What is more striking about this counting $N_{\mathbb{Q}}(S_3, X)$ is that it has a secondary term in the order of $X^{5/6}$. The existence of this secondary term, called Roberts' conjecture, is conjectured in both \cite{DW88} and \cite{Rob01}. This conjecture was proved independently by Bhargava, Shankar and Tsimerman \cite{BST} and by Taniguchi and Thorne \cite{TT13} at the same time, but with very different methods. A secondary term for the average class number is also proved in both papers. By combination of these two methods, Bhargava, Taniguchi and Thorne \cite{BTT} are able to prove this result with a better error term. Moreover in both \cite{TT13} and \cite{BTT}, the asymptotic distributions of $S_3$ cubic extensions with local conditions are obtained with an explicit dependency of local parameter in the error term, which our paper heavily depends on. 
\begin{theorem}[\cite{BTT}, Theorem $4.3$]\label{S3BTT}
	There exists constant $A$ and $B$ such that the asymptotic distribution of $S_3$ cubic extensions over $\mathbb{Q}$ is
	$$N_{\mathbb{Q}}(S_3, X) = AX + B X^{5/6} + O(X^{2/3+\epsilon}).$$
\end{theorem}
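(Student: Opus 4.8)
The plan is to translate the problem into a lattice-point count on the space of integral binary cubic forms and then combine two methods: the geometry-of-numbers treatment of the cusp, which supplies both main terms, together with an analysis of the Shintani zeta function, which supplies the power-saving error, following \cite{BST}, \cite{TT13} and \cite{BTT}. \emph{Step 1 (parametrization).} By the Delone--Faddeev and Davenport--Heilbronn correspondences, isomorphism classes of cubic rings over $\mathbb{Z}$ are in discriminant-preserving bijection with $\mathrm{GL}_2(\mathbb{Z})$-orbits on the lattice $V_{\mathbb{Z}}$ of integral binary cubic forms, the maximal rings (finite products of maximal orders of number fields) corresponding to forms that avoid, for each prime $p$, an explicit union of congruence classes modulo $p^2$ of density computed by Davenport--Heilbronn. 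The irreducible everywhere-maximal forms parametrize exactly the cubic fields, among which the $C_3$ ones number only $O(X^{1/2+\epsilon})$; the reducible forms parametrize algebras $\mathbb{Q}\times K$ with $K$ quadratic, whose maximal representatives are enumerated by the classical quadratic-field count, a term $\asymp X$ with error $O(X^{1/2})$. Hence it suffices to count all $\mathrm{GL}_2(\mathbb{Z})$-orbits with $0<|\disc|<X$, sieve down to the maximal ones, and remove these lower-complexity pieces.

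\emph{Step 2 (counting all cubic rings).} Using Davenport's reduction theory I would fix a fundamental domain $\mathcal{F}$ for $\mathrm{GL}_2(\mathbb{Z})$ on each sign component of the real forms, written in Iwasawa coordinates $g=n(u)a(t)k$ on $\mathrm{GL}_2(\mathbb{R})$, and count $V_{\mathbb{Z}}\cap\{f\in\mathcal{F}:|\disc f|<X\}$ by averaging over translates $g_0\mathcal{F}$ with $g_0$ in a fixed compact set of positive measure and swapping the sum with the integral. The ``main body'' (with $t$ bounded) contributes the volume of the truncated region, giving the leading term $AX$ up to a boundary error; in the cusp ($t\to\infty$) the surviving lattice points are the forms whose top coefficients vanish, i.e.\ those near reducibility, and a careful count of this genuinely lower-dimensional family yields an honest secondary term of exact order $X^{5/6}$ with $B$ an explicit product of archimedean and $p$-adic densities (this is the rigorous form of Roberts's heuristic). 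Cutting the cusp off at the optimal height and bounding the residual shallow region, balanced against the main-body boundary error, produces an error of size $O(X^{2/3+\epsilon})$ at this stage.

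\emph{Step 3 (sieve to maximal orders).} I would express the maximality condition as $\prod_p(\text{maximal at }p)$ via inclusion--exclusion, truncate the sieve at $p\le Y$, and bound the tail $p>Y$. The essential input is uniformity: either the geometry-of-numbers count of forms satisfying congruence conditions modulo $q$ with error polynomial in $q$, or---better for the error exponent---the Taniguchi--Thorne analysis of the Shintani zeta function $\xi(s)$ together with its ``non-maximal at $p$'' variants, whose two rightmost poles at $s=1$ and $s=5/6$ recover exactly the two main terms, and whose growth bounds on vertical strips (from the functional equation, refined by the theory of the Shintani zeta function) give a power-saving error that survives the sieve. Optimizing $Y$ against the truncated main contribution, the sieve error, and the tail estimate, and then subtracting the quadratic-field and $C_3$ contributions from Step 1, yields $N_{\mathbb{Q}}(S_3,X)=AX+BX^{5/6}+O(X^{2/3+\epsilon})$.

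\emph{Main obstacle.} The heart of the matter is making the cusp count and the maximality sieve cohere at the level $X^{2/3+\epsilon}$: one must extract $BX^{5/6}$ from the cuspidal lattice points with an error strictly smaller than $X^{5/6}$---not merely $o(X)$---and simultaneously guarantee that the sieve over $p$ does not reintroduce a larger term, which demands strong control of the $q$-dependence, equivalently good bounds for the twisted Shintani zeta functions on vertical lines. Each of the two methods in isolation gives only a weaker exponent, so forcing them to reach $2/3$ together is the main difficulty, exactly as in \cite{BTT}.
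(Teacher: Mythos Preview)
The paper does not prove this theorem at all: it is quoted verbatim as an external input from \cite{BTT} (their Theorem~4.3), and the present paper only \emph{uses} it---together with the local-condition refinement recorded later as Theorem~\ref{S3local}---as a black box in the sieve of Section~4. So there is no in-paper proof to compare your proposal against.

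That said, your sketch is a fair high-level summary of the actual argument in \cite{BTT}, which does proceed by combining the Bhargava--Shankar--Tsimerman geometry-of-numbers treatment (parametrization by binary cubic forms, averaging over a compact set, extracting the $X^{5/6}$ term from the cusp) with the Taniguchi--Thorne Shintani-zeta-function analysis to push the error down to $X^{2/3+\epsilon}$. Your identification of the ``main obstacle''---making the cusp extraction and the maximality sieve cohere below $X^{5/6}$---is on point. For the purposes of this paper, however, you should simply cite the result rather than reproduce its proof; the paper's own contribution lies elsewhere (the uniformity estimate of Section~3 and the sieve over local specifications in Section~4), and Theorem~\ref{S3BTT} is purely an imported ingredient.
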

Malle brought forward his conjecture \cite{Mal02} and \cite{Mal04} on the order of the main term in $N_k(G, X)$, however there is still little understanding towards the secondary term. Thorne has a summary \cite{Th12} on all approaches to understand the secondary term for cubic fields, including Hough's \cite{Hou} and Zhao's work \cite{Zhao} on variations of Roberts' conjecture from different perspective, aside from the results mentioned above. 

It is surely beneficial if more examples of secondary terms for asymptotic estimates of  $N_k(G, X)$ are presented. It is natural to look at the asymptotic distribution of $S_4$ quartic fields $N_{\mathbb{Q}}(S_4, X)$, of which the main term is proved in \cite{Bha05}, since they are also parametrized by orbits in a pre-homogeneous vector space and give average $2$-class number of $S_3$ cubic fields. In \cite{Co06}, the authors record a conjectural secondary term in the order of $X^{5/6}$ of quartic fields by Yukie, along with a third term in the order of $X^{3/4}\ln X$ and even a fourth term $X^{3/4}$. However no proof on the secondary term in the quartic case is known. On the other hand, Taniguchi and Thorne \cite{TT14} conjectured a secondary term with precise constant on $S_3$ sextic fields, and a third term is also conjectured. It would be possible to prove the secondary term in the sextic case if both the exponent of $X$ and the dependency of the local parameters could be improved a lot in the error term of the distribution of cubic fields with local conditions. 

In a recent paper of the author \cite{JW17}, the asymptotic distribution of $S_3\times A$  fields are obtained in terms of a precise main term when $A$ is any odd abelian group. The main result of our paper is to prove the secondary term for the asymptotic distribution of $S_3\times A$ number fields with degree $3|A|$ for $A$ with minimal prime divisor greater than $5$. This provides a second example of a secondary term in distribution of number fields, and actually infinitely many such examples.
%
\begin{theorem}\label{Thm1}
	Let $A$ be an abelian group with minimal prime divisor greater than 5. Then there exist $C_1$, $C_2$ and $\delta>0$ such that the asymptotic distribution of $S_3\times A$ number fields with degree $3|A|$ over $\mathbb{Q}$ by absolute discriminant is $$N_{\mathbb{Q}}(S_3\times A, X) = C_1 X^{1/|A|} + C_2 X^{5/6|A|} + O(X^{5/6|A| - \delta}).$$
\end{theorem}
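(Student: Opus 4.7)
\bigskip

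\noindent\textbf{Proof sketch.} Because the minimal prime divisor of $|A|$ exceeds $5$, we have $\gcd(|A|,6)=1$, and so every $S_3\times A$-extension $K/\mathbb{Q}$ of degree $3|A|$ is uniquely a compositum $K=K_1K_2$, where $K_1$ is an $S_3$-cubic field and $K_2$ is an $A$-extension, necessarily linearly disjoint over $\mathbb{Q}$. Coprimality of the degrees, together with the conductor--discriminant formula on the abelian factor, gives
$$\disc(K)\;=\;\disc(K_1)^{|A|}\cdot\disc(K_2)^{3}\cdot\Delta(K_1,K_2),$$
where $\Delta(K_1,K_2)$ is a bounded local correction supported on primes ramified in both factors. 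The strategy is to organize the sum as in the main-term sieve of \cite{JW17}: fix $K_2$, count the admissible $K_1$ against a shrunken discriminant bound and local conditions at the primes dividing $\disc(K_2)$, and then sum over $K_2$.

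For each $K_2$ the inner count is $N_{\mathbb{Q}}(S_3,X_{K_2};\Sigma_{K_2})$ with $X_{K_2}\asymp X^{1/|A|}/\disc(K_2)^{3/|A|}$ and $\Sigma_{K_2}$ a family of local splitting constraints at the primes $p\mid\disc(K_2)$. The essential new input beyond \cite{JW17} is to invoke the local-condition refinement of Theorem \ref{S3BTT} from \cite{BTT}: it yields a main term of order $X_{K_2}$, a genuine secondary term of order $X_{K_2}^{5/6}$, and an error of order $X_{K_2}^{2/3+\epsilon}$, all with explicit polynomial dependence on the conductor of $\Sigma_{K_2}$.

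Summing the main and secondary contributions over all $A$-extensions $K_2$ produces the constants
$$C_1\;=\;A\sum_{K_2}\frac{m_1(K_2)}{\disc(K_2)^{3/|A|}},\qquad C_2\;=\;B\sum_{K_2}\frac{m_2(K_2)}{\disc(K_2)^{5/(2|A|)}},$$
where $m_i(K_2)$ encode bounded local masses arising from $\Sigma_{K_2}$ and $\Delta(K_1,K_2)$. Both Dirichlet series converge: a Wright/Malle-type tail bound gives $\#\{K_2:\disc(K_2)\le Y\}=O(Y^{1/(|A|(1-1/p_{\min}))+\epsilon})$, and the hypothesis $p_{\min}>5$ makes the required exponents $3/|A|$ and $5/(2|A|)$ exceed this abscissa comfortably, pinning down the exponents $1/|A|$ and $5/(6|A|)$ in the theorem.

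The crux, and the expected main obstacle, is the error bookkeeping. Per $K_2$ the error beats the secondary term by a factor $X_{K_2}^{-1/6}$, which aggregates to a saving of $X^{-1/(6|A|)}$ in the outer sum \emph{provided} the uniformity in the local parameters of $\Sigma_{K_2}$ supplied by \cite{BTT} grows at most polynomially in $\disc(K_2)$ with exponent swallowed by the convergence of the tail Dirichlet series at the shifted abscissa $(2+\epsilon)/|A|$. The hypothesis $p_{\min}>5$ again suffices here: it forces $(2+\epsilon)(1-1/p_{\min})>1$, so the truncation $\disc(K_2)\le X^{1/3}$ (beyond which the inner count vanishes) together with a power-saving estimate on the tail yields a net error $O(X^{5/(6|A|)-\delta})$ for some explicit $\delta>0$. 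Verifying that the \cite{BTT} uniformity is genuinely strong enough to absorb the local constraints at every prime dividing $\disc(K_2)$, and packaging the $\Delta(K_1,K_2)$ factor into $\Sigma_{K_2}$ without disturbing the analysis, are the two technical hearts of the proof; everything else is standard sieve bookkeeping.
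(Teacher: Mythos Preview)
Your architecture matches the paper's: decompose each $S_3\times A$-extension as a compositum $K_1K_2$, apply the local-condition version of the cubic secondary-term theorem from \cite{BTT} to the inner count, and sum over the abelian factor. The gap is precisely at the step you flag as a ``technical heart'': the \cite{BTT} uniformity is \emph{not} strong enough on its own, so the verification you hope to carry out actually fails. The \cite{BTT} error for cubic fields with prescribed ramification at primes dividing $q$ (partial) and $r$ (total) carries a factor $C_qC_{r^2}\asymp (qr)^{4/5}$. When one aggregates this over all joint-ramification strata of $(K_1,K_2)$ up to the natural truncation, the accumulated error has order $X^{2/(3m)+c/m+\epsilon}$ with $c\ge 17/45$ for every admissible $A$ (the worst case coming from the totally-ramified stratum), hence exceeds not only $X^{5/(6m)}$ but even $X^{1/m}$. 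Your convergence heuristic $(2+\epsilon)(1-1/p_{\min})>1$ already holds at $p_{\min}=3$, so it cannot be the mechanism that singles out $p_{\min}>5$.

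What the paper supplies to close this gap is a \emph{new} uniformity estimate (Theorems \ref{unipa} and \ref{unipato}), proved via the geometric sieve on binary cubic forms: $N_{q,r}(S_3,X)=O\bigl(X/(|q|^{1/6-\epsilon}|r|^{2-\epsilon})\bigr)$. This bound improves as $q,r$ grow and is complementary to \cite{BTT}. The outer sum over ramification data $\rho$ is then split into a small range, where the \cite{BTT} asymptotic with its secondary term is used, and a large range, where the new uniformity bound replaces it entirely; the cutoff is optimized so that the two errors balance. The hypothesis $p_{\min}>5$ enters only in this final balancing (inequalities (\ref{inseds}) and (\ref{insedl})), not in any tail-series convergence. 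Without the new uniformity result and the two-range optimization, the sieve cannot recover even a power-saving error, let alone the secondary term.
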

The constants $C_1$ and $C_2$ are all finite sum of Euler products. As an example, in section $4.7$ we give the precise constants $C_1$ and $C_2$ when $A = C_l$ is cyclic group with prime order $l>5$.

For $A$ with minimal prime divisor $3$ or $5$, we prove a weaker result, i.e., a power saving error is obtained. 
\begin{theorem}\label{Thm2}
	Let $A$ be any odd abelian group. Then there exist $C$ and $\delta>0$ such that the asymptotic distribution of $S_3\times A$-number fields over $\mathbb{Q}$ by absolute discriminant is $$N_{\mathbb{Q}}(S_3\times A, X) = CX^{1/|A|} + O(X^{1/|A| -\delta}).$$
\end{theorem}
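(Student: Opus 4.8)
The plan is to build the $S_3 \times A$ fields out of an $S_3$-cubic piece and an $A$-abelian piece, as in \cite{JW17}, but now track error terms with explicit dependence on the conductor of the abelian part. Since $\gcd(6, |A|) = 1$ and $A$ is abelian, an $S_3 \times A$ extension $K/\mathbb{Q}$ corresponds (up to bounded multiplicity, via the subgroup structure of $S_3 \times A$ and the fact that the two factors have coprime order) to a pair $(K_3, F)$ where $K_3$ is a cubic field with Galois group $S_3$ and $F/\mathbb{Q}$ is an $A$-extension, with the discriminant relation $\disc(K) = \disc(K_3)^{|A|} \cdot \disc(F)^{3} \cdot (\text{correction at common ramification})$ up to the usual conductor-discriminant bookkeeping; the essential point is that $\disc(K) \asymp \disc(K_3)^{|A|} \operatorname{cond}(F)^{c}$ for an explicit exponent $c$ depending on $A$. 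So $N_{\mathbb{Q}}(S_3 \times A, X)$ becomes, up to inclusion-exclusion over the shared ramified primes, a sum
\[
  \sum_{F} N_{\mathbb{Q}}\bigl(S_3, \, (X / q_F^{c})^{1/|A|}; \, \Sigma_F\bigr),
\]
where $F$ ranges over $A$-extensions with conductor $q_F$ bounded by a power of $X$, and $N_{\mathbb{Q}}(S_3, Y; \Sigma_F)$ counts $S_3$-cubic fields of discriminant up to $Y$ satisfying a set of local conditions $\Sigma_F$ at the primes dividing $q_F$ (forcing the prescribed splitting/ramification so that the compositum is an $S_3 \times A$ field rather than a proper subextension).

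The first main step is to substitute the local-condition version of Theorem 1.2 (Theorem 4.3 of \cite{BTT} as quoted above, in its form with local conditions and explicit conductor-dependence in the error). This gives, for each $F$, a main term $A(\Sigma_F)\,(X/q_F^c)^{1/|A|}$ plus an error of size $O\bigl((X/q_F^c)^{(1/|A|)(2/3+\epsilon)} \cdot q_F^{\kappa}\bigr)$ for some explicit $\kappa$ coming from how the $BTT$ error scales with the number and size of the ramified primes. The secondary term $B(\Sigma_F) (X/q_F^c)^{5/6|A|}$ from $BTT$ will, after summing over $F$, contribute at the order $X^{5/6|A|}$ only if $\sum_F B(\Sigma_F) q_F^{-5c/6|A|}$ converges; for general odd $A$ this sum need not converge (this is exactly why Theorem 1.1 needs the minimal-prime-divisor hypothesis $> 5$), so here we only harvest the main term and absorb both the $BTT$ secondary term and the $BTT$ error term into the claimed power-saving error.

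The second main step is the uniformity / tail estimate: sum the errors over all $A$-extensions $F$ with $q_F \le X^{\beta}$ for the appropriate $\beta$, and separately bound the contribution of $F$ with $q_F$ large (the tail), using the count $\#\{F : q_F \le Q\} = O(Q^{1+\epsilon})$ (trivially, or with the precise Wright/Mäki asymptotics for abelian extensions) together with the upper bound $N_{\mathbb{Q}}(S_3, Y) = O(Y)$. The tail where $q_F^c \ge X^{1-\eta}$ contributes $O(X^{1/|A| - \eta/|A| + \epsilon})$ after summing, which is power-saving. For the range $q_F$ small, the dominant term is $\sum_F A(\Sigma_F) q_F^{-c/|A|} \cdot X^{1/|A|}$, whose tail beyond $q_F \le X^{\beta}$ is again power-saving since $c/|A| > 1$ can be arranged, or more carefully one shows the Dirichlet series $\sum_F A(\Sigma_F) q_F^{-cs}$ has a suitable region of convergence and the truncation error is polynomial in $X^{-1}$. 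Collecting the convergent part of this sum yields the constant $C$.

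The main obstacle I expect is controlling the error term of the $S_3$-count \emph{uniformly in the local conditions} $\Sigma_F$: as $F$ varies, $\Sigma_F$ involves more and more primes (of growing size), and the $BTT$ error must be shown to degrade only polynomially in $q_F$, with an exponent small enough that when weighted by $\#\{F : q_F \approx Q\} \approx Q$ and summed, the total stays below $X^{1/|A|}$. This is precisely the ``good uniformity estimates'' advertised in the abstract, and it is where one must open up the geometry-of-numbers or Shintani-zeta input behind Theorem 1.2 rather than use it as a black box — tracking how the implied constant depends on the set of primes at which local conditions are imposed. A secondary technical point is the inclusion-exclusion sieve needed to pass from ``$S_3$-cubic with prescribed local behavior, times $A$-extension'' to genuinely $S_3 \times A$ fields of degree exactly $3|A|$, discarding degenerate compositum where the cubic and abelian parts share a subfield — but since $\gcd(6,|A|)=1$ this intersection is forced to be trivial, so the sieve is finite and contributes only to the constant $C$ and to lower-order (still power-saving) error.
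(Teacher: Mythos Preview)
Your overall architecture is right --- decompose into an $S_3$-cubic piece and an $A$-abelian piece, use the Bhargava--Taniguchi--Thorne count with local conditions for the cubic, and split into a ``sieve'' range and a ``tail'' range --- and this is indeed what the paper does. But there is a genuine gap in the tail, and a structural issue in how you fiber the sum.

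\textbf{The tail needs uniformity for ramified cubic fields, not just the trivial bound.} You propose to handle large $F$ with $N_{\mathbb{Q}}(S_3,Y)=O(Y)$. That is not enough. The paper organizes the count by the \emph{joint} ramification type $\rho$ of the pair $(K,L)$ at their common ramified primes, and for large $\rho$ bounds $B(\rho^1,XL_\rho)$ using Theorem~3.4, namely $N_{q,r}(S_3,X)=O\bigl(X/(q^{1/6-\epsilon}r^{2-\epsilon})\bigr)$, which combines the Datskovsky--Wright saving at totally ramified primes with a new estimate (Theorem~3.2, proved via the geometric sieve) at partially ramified primes. This produces tail exponents $d(1,j)=5/6-\ind((12),b_j)/m$ and $d(2,j)=-\ind((123),b_j)/m$, both $<-1$ by Lemma~4.7, so the sum over large $\rho$ converges with a power saving. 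With only the trivial $O(Y)$ bound you would get $d(2,j)=2-\ind((123),b_j)/m$, and since $\ind((123),b_j)/m<3$ always, this exceeds $-1$ and the tail sum over total-ramification types diverges. You correctly flag ``uniformity'' as the obstacle, but you locate it only in the dependence of the BTT \emph{error} constant on the local conditions (the $(qr)^{4/5}$ in Theorem~4.1). The paper uses that too, for the small-$\rho$ range; but the large-$\rho$ range requires a second, logically independent uniformity statement about the \emph{count} $N_{q,r}(S_3,X)$ itself.

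\textbf{Fibering over $F$ alone does not give a single $q_F^c$.} The relation $\Disc(KL)=\Disc(K)^{|A|}\Disc(L)^3/L_\rho$ has correction $L_\rho$ depending on how \emph{both} $K$ and $L$ ramify at each common prime, so there is no single exponent $c$ with $\Disc(KL)\asymp\Disc(K)^{|A|}q_F^c$. The paper handles this by summing over the joint local type $\rho=(w,(q_{ij}))$ and then running inclusion--exclusion $B(\rho^0,X)=\sum_{\eta}\mu(\eta)B((\rho\eta)^1,X)$ to pass from ``ramified at least at $\rho$'' to ``ramified exactly at $\rho$''. Your sum over $F$ with a single local condition $\Sigma_F$ does not capture this.

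\textbf{Minor point.} You write $\gcd(6,|A|)=1$, but the theorem allows $3\mid|A|$. Linear disjointness of $\tilde{K}$ and $L$ still holds because any common subfield has Galois group an abelian quotient of $S_3$ of odd order, hence trivial; the paper absorbs the primes dividing $6|A|$ into a finite set $T$ handled separately.
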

The amount of power saving $\delta$ in both Theorem \ref{Thm1} and \ref{Thm2} are computed in section \ref{sec48}. 

To prove these results, we apply a sieve method for the range of small primes, building on the distribution of cubic fields with local conditions, and we prove a new uniformity estimate of ramified cubic fields for the range of large primes. For these cases in Theorem \ref{Thm1} and \ref{Thm2}, this method provides another method to prove Malle's conjecture. However, the method in \cite{JW17} does not require any information about the error from both $S_n$ extensions for $n = 3,4,5$ and $A$ extensions, therefore we could get the main term proven in more cases there. Neither method subsumes the other.

Based on these new examples, we give the following conjecture.
\begin{conjecture}
	Given $G_i\subset S_{n_i}$ for $i = 1, 2$,  if the asymptotic distribution of $G_1$
	extensions has a secondary term in the order of $X^{c_1}$ such that $\ind(G_2)> \frac{n_2}{n_1 c_1}$, then the asymptotic distribution of $G_1\times G_2$ extensions has a secondary term in the order of $X^{c_1/n_2}$.
\end{conjecture}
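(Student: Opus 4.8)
The plan is to imitate the sieve architecture the paper develops for $S_3\times A$, but with $S_3$ replaced by an arbitrary $G_1\subset S_{n_1}$ whose count $N_{\mathbb{Q}}(G_1,X)$ is known with a secondary term, and with $A$ replaced by $G_2\subset S_{n_2}$. The first step is a combinatorial reduction: a $G_1\times G_2$ extension $L/\mathbb{Q}$ (as a permutation group of degree $n_1n_2$) corresponds to a pair $(K_1,K_2)$ where $K_i$ is a $G_i$-extension, and the discriminant satisfies the composition formula $\disc(L) = \disc(K_1)^{n_2}\disc(K_2)^{n_1}\cdot(\text{correction at ramified-in-common primes})$, exactly as in \cite{JW17}. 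The condition $\disc(L)\le X$ then becomes, up to the correction factor, $\disc(K_1)\le (X/\disc(K_2)^{n_1})^{1/n_2}$, so summing over $K_2$ with $\disc(K_2)$ small and invoking the secondary-term asymptotic for $G_1$ inside each range gives a main term of order $X^{1/n_2}$ (from the leading term $X$ of $N_{\mathbb{Q}}(G_1,\cdot)$) and a candidate secondary term of order $X^{c_1/n_2}$ (from the $X^{c_1}$ term). The hypothesis $\ind(G_2) > n_2/(n_1 c_1)$ is precisely what is needed so that the tail of the $K_2$-sum — governed by $\sum_{K_2}\disc(K_2)^{-n_1 c_1/n_2}$, which converges iff $\ind(G_2)\cdot(n_1 c_1/n_2) > 1$ by Malle-type upper bounds on $N_{\mathbb{Q}}(G_2,\cdot)$ — converges and does not swamp the $X^{c_1/n_2}$ term with an $X^{1/n_2-\text{something}}$-size or larger error.

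The second step is to control the two sources of error that this naive splitting hides. The range of \emph{large} $\disc(K_2)$ is handled by the convergence just described together with a uniformity estimate for $N_{\mathbb{Q}}(G_1,X)$ that is uniform over the local conditions imposed by ``$K_1$ ramified where $K_2$ is''; this is the analogue of the new uniformity estimate for ramified cubic fields proved in the body of the paper, and in the general statement it must simply be \emph{assumed} (or derived from an effective Malle bound for $G_1$ with a power-saving dependence on the conductor of the local condition). The range of \emph{small} $\disc(K_2)$ is handled by a genuine sieve: for each fixed $K_2$ one plugs the secondary-term asymptotic Theorem (the $G_1$-analogue of Theorem \ref{S3BTT}) with local conditions at the primes dividing $\disc(K_2)$, keeping explicit track of the conductor dependence in the $O(X^{2/3+\epsilon})$-type error, and then one sums the resulting main and secondary Euler-product constants over $K_2$. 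One must check that the doubly-infinite Euler product over all primes and all $K_2$ converges and assembles into a single constant $C_2$; this is the same bookkeeping as in section $4.7$ of the paper, now carried out abstractly.

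The third step is to isolate the power saving $\delta$. After the two ranges are glued at a cutoff $\disc(K_2)\le Y$, one optimizes $Y$ against $X$: large $\disc(K_2)$ contributes $\ll X^{1/n_2}Y^{-(n_1\ind(G_2)-n_2)/n_2}$ (convergent tail of a Malle sum, using $\ind(G_2)>n_2/(n_1 c_1)\ge$ the bound forcing convergence) plus the uniformity error, while small $\disc(K_2)$ contributes the $G_1$-secondary-term error $\ll X^{(2/3+\epsilon)/n_2}\cdot(\text{conductor})^{\kappa}$ summed over $K_2\le Y$. Balancing these gives a $\delta>0$ provided the secondary term exponent $c_1$ strictly exceeds the error exponent of $N_{\mathbb{Q}}(G_1,\cdot)$ (true for $S_3$: $5/6>2/3$) — so strictly the conjecture as I would prove it needs, in addition to the stated hypothesis, that the $G_1$-count has a \emph{power-saving} error below $X^{c_1}$ and a conductor-uniform version of it.

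\textbf{Main obstacle.} The genuine difficulty is not the combinatorics but the two inputs the $S_3$ case gets for free from \cite{BTT}: (i) a secondary term for $N_{\mathbb{Q}}(G_1,X)$ \emph{with local conditions} and with an \emph{explicit polynomial conductor dependence} in a power-saving error, and (ii) a uniformity estimate for $G_1$-extensions ramified at a prescribed set of primes. For general $G_1$ neither is known — indeed even the secondary term itself is open beyond $G_1=S_3$ (and conjecturally $S_4$, $S_3\subset S_6$). So a complete proof of the Conjecture in full generality is out of reach; what the plan delivers is a \emph{conditional} theorem: granting an effective, conductor-uniform secondary-term asymptotic for $G_1$ with power-saving error, the sieve-plus-uniformity machinery of this paper upgrades it to the asserted $X^{c_1/n_2}$ secondary term for $G_1\times G_2$ whenever $\ind(G_2)>n_2/(n_1 c_1)$. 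The honest statement of the result is therefore that the Conjecture is a theorem relative to the $G_1$-inputs, and unconditional exactly in the cases (such as $S_3\times A$ treated here) where those inputs are available.
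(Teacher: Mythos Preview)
The statement you are addressing is a \emph{Conjecture} in the paper, not a theorem; the paper offers no proof of it. It is formulated as a prediction extrapolated from the $S_3\times A$ results (Theorems \ref{Thm1} and \ref{Thm2}), and the body of the paper proves only those special cases.

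Your proposal is essentially the natural abstraction of the paper's $S_3\times A$ argument to general $G_1\times G_2$, and you have correctly identified where it breaks down. The two inputs you flag as obstacles --- (i) a secondary-term asymptotic for $G_1$-extensions with local conditions and explicit polynomial conductor dependence in a power-saving error (the analogue of Theorem \ref{S3local}), and (ii) a uniformity estimate for $G_1$-extensions ramified at prescribed primes (the analogue of Theorem \ref{unipato}) --- are exactly the ingredients the paper imports from \cite{BTT} and proves in Section 3, respectively, and neither is available for any $G_1$ other than $S_3$. There is also a third, more minor input you underplay: for the tail on the $G_2$ side one needs not just Malle-type upper bounds but a uniformity estimate of the shape of Theorem \ref{uniab}, which the paper has for abelian $G_2$ but which is again unknown for general $G_2$. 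Your conclusion that the argument yields only a conditional theorem, unconditional precisely in the cases the paper already covers, is the honest assessment; the paper does not claim more, which is why the statement is recorded as a conjecture.
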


We organize the paper as following. In section $2$, we give a summary on necessary results as preliminaries. This includes the description of discriminant of the compositum of two disjoint number fields, and the product lemma. In section $3$, we prove a new uniformity estimate on partially ramified cubic fields by geometric sieve. In section $4$, we apply the sieve method to prove the main theorem. \\

%

\textbf{Notations}\\
$p$: a finite place in base field $k$ or a prime number\\
$|\cdot|$:  absolute norm $\Nm_{k/\mathbb{Q}}$\\
$\Disc(K)$:  absolute norm of $\disc(K/\mathbb{Q})$\\
$\Disc_p(K)$: $p$ part of $\Disc(K)$\\
$\tilde{K}$: Galois closure of $K$ over $\mathbb{Q}$\\
$\ind(\cdot)$: the index $n$ - $\sharp \{\text{orbits}\}$ for a cycle or minimum value of index among non-identity elements for a group\\
$N_k(G,X)$: the number of isomorphic classes of $G$-extension over $k$ with $\Disc$ bounded by $X$

\section{Preliminaries}
In this section, we will give a summary on the discussion of the discriminant of compositum and the product argument in the author's previous paper \cite{JW17}. 
\subsection{Discriminant of $S_3\times A$}
Throughout the section, $K$ and $L$ are finite field extensions but not necessarily Galois. We will denote $\Gal(K/\mathbb{Q})$ to be the Galois group $\Gal(\tilde{K}/\mathbb{Q})\subset S_{n}$ as a permutation subgroup which acts on the $n$ embeddings of $K$ into $\bar{\mathbb{Q}}$ where $n = \deg(K/\mathbb{Q})$. 

Given a pair of extensions $(K, L)$ over $\mathbb{Q}$ where $\Gal(K) = G_1\subset S_n $ and $\Gal(L) = G_2 \subset S_{m}$ which intersect trivially, our goal is to determine the discriminant $\Disc(KL)$ completely. If we know the ramification information of $K$ and $L$ completely, we would be able to pin down the absolute discriminant of the compositum $\Disc(KL)$. Indeed, isomorphism classes of \'etale extensions of degree $n$ over $\mathbb{Q}$ (or $\mathbb{Q}_p$) are in one-to-one correspondence to $G_{\mathbb{Q}} \to S_n$ ($G_{\mathbb{Q}_p} \to S_n$ ) up to relabeling the letters, therefore we get the map $\phi_1: G_{\mathbb{Q}_p}\hookrightarrow G_{\mathbb{Q}} \to S_n$ for $K$ and similarly $\phi_2$ for $L$ that record the complete local information. The direct product of $\phi_1\times \phi_2: G_{\mathbb{Q}_p}\to S_{n}\times S_m\subset S_{mn}$ corresponds to the local \'etale extension $(KL)_p : = (KL) \otimes_{\mathbb{Q}} \mathbb{Q}_p$ with $\Disc((KL)_p/\mathbb{Q}_p) = \Disc_p(KL)$. Therefore the data of $\phi_1$ and $\phi_2$ is sufficient to determine the discriminant. 

Moreover at a prime $p$ where both $K$ and $L$ are tamely ramified, we can determine $\Disc_p(KL)$ with less data. Such primes are all but finitely many. In this case, the exponent of $p$ in the discriminant could be determined by the inertia group as a permutation subgroup. A tamely ramified inertia group could be generated by a single element, so let's denote the inertia group of $K$ and $L$ by $I_1 = \large \langle g_1 \large \rangle $ and $I_2 = \large \langle g_2\large \rangle $. 
\begin{theorem}[\cite{JW17}, Theorem $2.2$, $2.3$]\label{indge}
	Let $K$ and $L$ be as given above which are both tamely ramified at $p$. Let $g_i$ for $i = 1,2$, with order $e_i$, be the generator of inertia group at $p$ for $\tilde{K}$and $\tilde{L}$. The generator $g_1$ is a product of $k$ disjoint cycles $\prod c_k$ and $g_2$ is a product of $l$ disjoint cycles $\prod d_l$. Then the exponent for $p$ in $\Disc_p(KL)$ is $mn-\sum_{k,l} \gcd(|c_k|, |d_l|)$. Moreover, if $(e_1, e_2) = 1$, then the exponent for $p$ in $\Disc_p(KL)$ is $mn-kl$.
\end{theorem}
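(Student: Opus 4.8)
The plan is to reduce the global statement to a purely local computation at a single tamely ramified prime $p$, and then to analyze the étale algebra $(KL)_p = K_p \otimes_{\mathbb{Q}_p} L_p$ orbit by orbit. First I would recall the conductor–discriminant dictionary in the tame case: for an étale algebra over $\mathbb{Q}_p$ corresponding to a homomorphism $\phi\colon G_{\mathbb{Q}_p}\to S_N$, the $p$-exponent of the discriminant equals $N - (\text{number of orbits of the inertia image on }\{1,\dots,N\})$, since each tame inertia cycle of length $f$ contributes a totally ramified piece of degree $f$ with discriminant exponent $f-1$. So the whole theorem comes down to counting orbits of $\langle (g_1,g_2)\rangle$ acting on $\{1,\dots,n\}\times\{1,\dots,m\}$, where $I_1 = \langle g_1\rangle$, $I_2 = \langle g_2\rangle$ are the inertia generators (as in the excerpt's setup, using that $K,L$ intersect trivially so the inertia of $\widetilde{KL}$ at $p$ is generated by the pair $(g_1,g_2)$).

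Next I would do the orbit count. Decompose $g_1 = \prod_{k} c_k$ and $g_2 = \prod_{l} d_l$ into disjoint cycles on the two letter-sets. The product action of $\langle(g_1,g_2)\rangle$ on the product set splits as a disjoint union, over pairs $(c_k, d_l)$, of the action of the cyclic group generated by $(c_k, d_l)$ on the product of the two cycles' supports, of sizes $|c_k|$ and $|d_l|$. A standard fact about cyclic group actions: $\langle(c_k,d_l)\rangle$ acting on a set of size $|c_k|\cdot|d_l|$ (a product of a $|c_k|$-cycle and a $|d_l|$-cycle) has exactly $\gcd(|c_k|,|d_l|)$ orbits, each of size $\mathrm{lcm}(|c_k|,|d_l|)$. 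Summing, the total number of orbits of $\langle(g_1,g_2)\rangle$ on the $mn$ letters is $\sum_{k,l}\gcd(|c_k|,|d_l|)$, so the discriminant exponent is $mn - \sum_{k,l}\gcd(|c_k|,|d_l|)$, which is the first claim. For the second claim, assume $\gcd(e_1,e_2)=1$; since $|c_k|\mid e_1$ and $|d_l|\mid e_2$, every $\gcd(|c_k|,|d_l|)=1$, and the double sum collapses to $k\cdot l$, giving exponent $mn - kl$.

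The one genuine subtlety — and the step I would treat most carefully — is justifying that the inertia subgroup of $G_{\mathbb{Q}_p}$ inside $\widetilde{KL}$ really maps to the cyclic group generated by the single element $(g_1, g_2)$ in $S_n\times S_m$, i.e. that the ramified behavior of the compositum is controlled by the pair of inertia generators with no "interference." This uses that $p$ is tame in both $K$ and $L$, so each tame inertia group is procyclic with a canonical topological generator, and that a compatible choice of generator can be made for both simultaneously (the tame quotient of $G_{\mathbb{Q}_p}$ being procyclic, generated by a single element whose images are $g_1$ and $g_2$); the trivial-intersection hypothesis on $\Gal(K)$ and $\Gal(L)$ guarantees $\widetilde{KL}$ has Galois group $G_1\times G_2$ so that "recording both local maps" is legitimate. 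Everything else is the elementary group-theoretic orbit count above plus the tame conductor-discriminant formula, so once this compatibility is pinned down the proof is essentially a one-paragraph computation. I would also remark that this recovers the familiar special cases (e.g. $L$ unramified at $p$ forces $g_2 = \mathrm{id}$, $l = m$, and the exponent becomes $m(n-k)$, i.e. $m$ copies of $\Disc_p(K)$), as a sanity check.
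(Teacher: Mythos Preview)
Your proposal is correct and follows the natural approach: reduce to the local tame conductor--discriminant formula (discriminant exponent $=$ degree minus number of inertia orbits), then count orbits of $\langle (g_1,g_2)\rangle$ on $\{1,\dots,n\}\times\{1,\dots,m\}$ block by block using the elementary fact that a product of an $a$-cycle and a $b$-cycle acting on $ab$ points has $\gcd(a,b)$ orbits. Note, however, that the present paper does \emph{not} prove this theorem at all: it is quoted verbatim as Theorems~2.2 and~2.3 of \cite{JW17}, so there is no proof in this paper to compare against. Your argument is essentially what one expects the proof in \cite{JW17} to be, and the one subtlety you flag (that the tame inertia of the compositum is generated by the single pair $(g_1,g_2)$, coming from the procyclic tame quotient of $G_{\mathbb{Q}_p}$) is exactly the point that needs care.
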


At wildly ramified primes, in most cases we do not have a direct way to compute, but there are only finitely many extensions over $\mathbb{Q}_p$ with bounded degree, so there are only finitely many possibilities for $\phi_1$ and $\phi_2$ for a certain pair of $(G_1, G_2)$. It will be shown in the discussion that they only affect the coefficient of the main term and the secondary term. 

\subsection{Product Argument}
In this section, we are going to include the product lemma on two distributions with different order of growth. We will apply this lemma on tail estimates. Denote $F_i$, $i = 1, 2$,  to be asymptotic distribution of some multi-set of positive integers $S_i$, i.e., $F_i(X) = \sharp\{ s\in S_i \mid s\le X\}$, and denote the product distribution $P_{a,b}(X) = \sharp\{ (s_1,s_2)\mid s_i\in S_i, s_1^as_2^b\le X\} $ where $a, b>0$.
\begin{lemma}[\cite{JW17}, Lemma $3.2$]\label{proneq}
	Let $F_i(X)$, $i=1,2$ be as given above and $F_i(X) \sim A_i X^{n_i}\ln^{r_i} X$ where $0 <n_i\le 1$ and $r_i\in \mathbb{Z}_{\ge 0}$.  If 
	$\frac{n_1}{a}- \frac{n_2}{b} > 0$, then there exists a constant $C$ such that  $$P_{a,b}(X)\sim C X^{\frac{n_1}{a}}\ln^{r_1} X.$$
	Furthermore if $F_i(X) \le A_i X^{n_i}\ln^{r_i} X$, then we have
	$$P_{a,b}(X)\le A_1A_2 \frac{r_2!}{b^{r_2}a^{r_1}} \frac{1}{(\frac{n_1}{a}-\frac{n_2}{b})^{r_2+1}}\frac{n_1}{a}X^{\frac{n_1}{a}} \ln^{r_1} X.$$
\end{lemma}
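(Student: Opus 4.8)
The plan is to reduce the product count $P_{a,b}(X)$ to a one-dimensional summation over elements of $S_2$ and then apply the hypothesis on $F_1$ to the inner count, keeping careful track of the logarithmic factors. First I would write
$$P_{a,b}(X) = \sum_{s_2\in S_2} F_1\left(\left(\frac{X}{s_2^b}\right)^{1/a}\right),$$
where the sum ranges over $s_2$ with $s_2^b\le X$, i.e. $s_2\le X^{1/b}$. Using $F_1(Y)\le A_1 Y^{n_1}\ln^{r_1}Y$, each term is bounded by $A_1 (X/s_2^b)^{n_1/a}\big(\tfrac1a\ln(X/s_2^b)\big)^{r_1}$, so
$$P_{a,b}(X)\le \frac{A_1}{a^{r_1}}X^{n_1/a}\sum_{s_2\le X^{1/b}} s_2^{-bn_1/a}\big(\ln X - b\ln s_2\big)^{r_1}.$$
Next I would estimate the sum over $s_2$ by Abel summation (partial summation) against the known asymptotic $F_2(Y)\sim A_2 Y^{n_2}\ln^{r_2}Y$. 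The key point is that the exponent in $s_2^{-bn_1/a}$ beats the growth rate $n_2$ of $F_2$ precisely because $\tfrac{n_1}{a}-\tfrac{n_2}{b}>0$, i.e. $\tfrac{bn_1}{a}>n_2$; this makes the tail of the $s_2$-sum convergent (up to the polynomial-in-$\ln$ weight), so the sum is dominated by a bounded range, and the $\ln^{r_1}X$ factor is pulled out essentially intact while the residual $s_2$-summation contributes a convergent constant times $\ln^{r_2}(\text{stuff})$.

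Concretely, for the sharp constant in the second assertion I would substitute $t = \ln s_2$ and compare $\sum_{s_2} s_2^{-bn_1/a}(\ln X - b\ln s_2)^{r_1}$ with the integral $\int_1^{X^{1/b}} F_2(u)\, d(u^{-bn_1/a}(\ln X - b\ln u)^{r_1})$ via partial summation; after integrating by parts and extending the range to infinity (legitimate since the integrand decays), the dominant contribution is
$$\frac{bn_1}{a}\int_1^\infty A_2 u^{n_2}\ln^{r_2}u \cdot u^{-bn_1/a - 1}\ln^{r_1}X\, \frac{du}{?}$$
— more precisely one gets a Gamma-type integral $\int_0^\infty v^{r_2}e^{-(bn_1/a - n_2)v}\,dv = r_2!\,(bn_1/a - n_2)^{-(r_2+1)}$ after the substitution $v = \ln u$, which accounts for the factor $\frac{r_2!}{(\frac{n_1}{a}-\frac{n_2}{b})^{r_2+1}}$ and the powers of $a$ and $b$ in the claimed bound, together with the leftover $\frac{n_1}{a}X^{n_1/a}\ln^{r_1}X$ coming from differentiating the outer power. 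For the first (asymptotic, not just upper bound) assertion, I would run the same partial summation but now using $F_2(Y) = A_2 Y^{n_2}\ln^{r_2}Y (1+o(1))$ and $F_1(Y) = A_1 Y^{n_1}\ln^{r_1}Y(1+o(1))$; the error terms are absorbed because the governing integral converges absolutely, yielding $P_{a,b}(X)\sim C X^{n_1/a}\ln^{r_1}X$ for the constant $C$ read off from the Gamma integral.

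The main obstacle I anticipate is bookkeeping rather than conceptual: one must ensure that the sub-leading range $s_2$ close to $X^{1/b}$ — where the argument $(X/s_2^b)^{1/a}$ of $F_1$ is small and the asymptotic for $F_1$ is not yet in force — contributes only a lower-order amount. This is handled by noting that in that range $F_1$ is trivially $O(1)$ (or $O(Y^{n_1}\ln^{r_1}Y)$ with $Y$ bounded), and the corresponding portion of the $s_2$-sum is $O(X^{n_2/b})$ up to logs, which is $o(X^{n_1/a}\ln^{r_1}X)$ exactly by the hypothesis $\frac{n_1}{a}>\frac{n_2}{b}$. A secondary technical point is that $S_1$ and $S_2$ are multisets of \emph{positive integers}, so the smallest element is at least $1$ and there are no convergence issues at the lower end; the sums are genuinely over $s_2\ge 1$. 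Matching the precise constant in the displayed inequality — in particular getting the powers $b^{-r_2}a^{-r_1}$ and the factor $\frac{n_1}{a}$ in the right places — will require doing the integration-by-parts substitution cleanly rather than with crude bounds, but it is a routine (if slightly delicate) calculus computation.
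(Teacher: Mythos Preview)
The paper does not actually prove this lemma: it is imported verbatim from \cite{JW17} (Lemma~3.2) and stated without argument in Section~2.2, so there is nothing in the present paper to compare against. Your proposal is therefore being evaluated on its own merits.

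Your approach is the natural one and is essentially correct. Writing $P_{a,b}(X)=\sum_{s_2\le X^{1/b}} F_1\big((X/s_2^b)^{1/a}\big)$, inserting the bound for $F_1$, and then handling the resulting $s_2$-sum by Abel summation against $F_2$ is exactly how such product lemmas are proved. One clarification that will make the constant fall out cleanly: rather than carrying $(\ln X - b\ln s_2)^{r_1}$ through the Abel summation, simply bound it above by $(\ln X)^{r_1}$ at the outset (valid since $s_2\ge 1$). Then the remaining sum $\sum_{s_2}s_2^{-bn_1/a}$ is handled by partial summation to give, via the substitution $v=\ln u$,
\[
\frac{bn_1}{a}\,A_2\int_0^\infty v^{r_2}e^{-(bn_1/a-n_2)v}\,dv
=\frac{bn_1}{a}\cdot\frac{A_2\,r_2!}{\big(b(\tfrac{n_1}{a}-\tfrac{n_2}{b})\big)^{r_2+1}}
=\frac{n_1}{a}\cdot\frac{A_2\,r_2!}{b^{r_2}(\tfrac{n_1}{a}-\tfrac{n_2}{b})^{r_2+1}},
\]
which together with the prefactor $A_1 a^{-r_1}X^{n_1/a}\ln^{r_1}X$ reproduces the displayed bound exactly. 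Your remarks about the small-argument range for $F_1$ and the lower endpoint $s_2\ge 1$ are the right sanity checks, and the asymptotic version follows by the same computation with $o(1)$ error terms, just as you indicate.
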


\section{Uniformity}
In this section we prove a new uniformity result on partially ramified $S_3$ cubic extensions at finitely many primes and merge this uniformity result with previous known uniformity estimates on totally ramified $S_3$ cubic fields. Although we only need these results over $\mathbb{Q}$, all of the results hold over arbitrary number field $k$. 

Let $k$ be a number field and $q$ be a square-free integral ideal in $\mathcal{O}_k$. Let us deonte $N_{q, r}(S_3, X)$ to be the number of $S_3$ cubic extensions over $k$ that are partially ramified at all places $p|q$, and totally ramified at all places $p|r$. Then we have Proposition 6.2 from \cite{DW88}:
  \begin{theorem}[\cite{DW88}, Proposition $6.2$]\label{unito}
  	The number of non-cyclic cubic extensions over $k$ which are totally ramified at a product of finite places $r = \prod{p_i}$ is:
  	$$N_{1, r}(S_3,X) = O(\frac{X}{|r|^{2-\epsilon}}),$$
  	for any number field $k$ and any square-free integral ideal $r$. The constant is independent of $q$, and only depends on $k$. 
  \end{theorem}      	
  On the other hand, by an argument in the author's previous work \cite{JW17} based on the geometric sieve method introduced in \cite{Bhasieve}, we get the following uniformity estimates on partially ramified extensions.
   \begin{theorem}\label{unipa}
   	The number of non-cyclic cubic extensions over $k$ which are partially ramified at a product of finite places $q = \prod{p_i}$ is:
   	$$N_{q,1}(S_3,X) = O(\frac{X}{|q|^{1/6-\epsilon}}),$$
   	for any number field $k$ and any square-free integral ideal $q$. The constant is independent of $q$, and only depends on $k$. 
   \end{theorem}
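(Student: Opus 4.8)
The plan is to run the geometric sieve of Bhargava \cite{Bhasieve} inside the parametrization of cubic rings by binary cubic forms, as in the author's earlier work \cite{JW17}. Over $k$, isomorphism classes of cubic orders correspond to $GL_2(\mathcal{O}_k)$-orbits on the lattice $V_{\mathcal{O}_k}$ of binary cubic forms (for general $k$ this is the twisted version of Datskovsky--Wright \cite{DW88}, involving a sum over the $h_k$ components of a family), with the discriminant of a form matching that of the order and maximality cut out by local conditions; irreducible maximal forms of non-square discriminant then correspond to non-cyclic cubic fields. First I would record that, away from the finitely many places above $2$ --- which are absorbed into a constant depending only on $k$, since each still contributes a local factor of size $\asymp|\mathfrak{p}|^{-1}$ --- the condition ``$K$ is partially ramified at $\mathfrak{p}$'' is equivalent to $v_{\mathfrak{p}}(\disc(f))=1$, i.e. $f$ lies on the generic part of the discriminant hypersurface modulo $\mathfrak{p}^{2}$, a locally closed condition of relative density $\asymp|\mathfrak{p}|^{-1}$. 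Collecting these conditions over $\mathfrak{p}\mid q$ gives a set $\Sigma_q\subseteq V_{\mathcal{O}_k/q^{2}}$ of density $\asymp|q|^{-1}$, and, up to the factor $h_k$ and the $O(X^{1/2})$ contribution of cyclic cubic fields, $N_{q,1}(S_3,X)$ equals the number of irreducible maximal forms in a fundamental domain $\mathcal{F}_X$ for $GL_2(\mathcal{O}_k)$ (of volume $\asymp X$) whose reduction lies in $\Sigma_q$.

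Next I would split $\mathcal{F}_X$ into a bounded ``main body'' and the ``cusp'' of forms with small leading coefficient, with a cutoff to be fixed at the end. On the main body, a lattice-point count --- Davenport's lemma for the sublattice of $V_{\mathcal{O}_k}$ defined by $\Sigma_q$, with constants depending only on $k$ --- contributes a term of order $|q|^{-1}X$ plus a boundary error that grows with a power of $|q|$. The cusp is essentially the reducible slice, hence governed by a thin family of binary quadratic forms; here I would apply the geometric-sieve estimates of \cite{Bhasieve}, in the form packaged in \cite{JW17}, to bound the reduced irreducible forms in the cusp whose reduction modulo $\mathfrak{p}$ meets the partially-ramified stratum for every $\mathfrak{p}\mid q$, extracting from these congruences only a saving weaker than the full $|q|^{-1}$. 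Balancing the cusp bound against the main-body boundary error, optimizing the cutoff, and summing over the $h_k$ components produces the asserted $O(X/|q|^{1/6-\epsilon})$, with implied constant depending only on $k$; for $|q|\gg_k X$ the bound is trivial, as then $N_{q,1}(S_3,X)=0$.

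I expect the cusp estimate --- and specifically making it uniform in $q$ --- to be the only genuinely delicate point: it is where the per-prime saving drops from the ``expected'' $|q|^{-1}$ to a fractional power, and the one place where the geometric sieve of \cite{Bhasieve} is really needed, the rest being reduction theory and lattice-point counting over an arbitrary base field. The exponent obtained this way is not meant to be sharp; the bound is simply robust and uniform enough to feed into the sieve for the main theorems, so I would not try to push it beyond what the argument of \cite{JW17} gives directly.
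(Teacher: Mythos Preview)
Your proposal is correct and uses the same ingredients as the paper --- the parametrization of cubic orders by binary cubic forms, Davenport-type lattice point counting with congruence conditions, and the geometric sieve framework from \cite{Bhasieve,JW17} --- but the paper's execution is noticeably simpler than what you outline. There is no split into main body versus cusp, no cutoff to optimize, and no separate ``delicate'' cusp analysis. Instead, the paper directly bounds the integrand on each slice of the fundamental domain by
\[
O\bigl(C^{\omega(q)}\bigr)\cdot\max\Bigl\{\frac{\lambda^{4}}{q},\ \lambda^{3}t^{3}\Bigr\},
\]
where the first term is the volume with the density saving $1/q$ and the second is the $q$-independent Davenport error (the largest three-dimensional projection). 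Integrating over the fundamental domain gives $O(C^{\omega(q)})\max\{X/q,\,X^{5/6}\}$ outright. The exponent $1/6$ then comes not from balancing but from the trivial interpolation $\max\{X/q,\,X^{5/6}\}\le X/|q|^{1/6}$, valid because any cubic field partially ramified at $q$ has $|q|\le X$. In particular, no $q$-saving whatsoever is extracted from the cusp, so the part you flagged as ``genuinely delicate'' is in fact where the argument is most crude. Your version would work too, but the boundary error you anticipate ``growing with a power of $|q|$'' on the main body is an artifact of covering the congruence condition by cosets of $q\cdot V_{\mathcal{O}_k}$; the direct per-slice bound above avoids this entirely.
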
      	
  This result comes from Theorem $4.5$ in \cite{JW17} and the observation that if we just focus on the number of cubic orders ramified at a fixed finite set of places, then we can improve the power saving error in the geometric sieve\cite{Bhasieve} and therefore drop the codimension $2$ condition.  We could similarly get the uniformity result for ramified $S_4$ and $S_5$ extensions by the same way. As a corollary of Theorem \ref{unipa}, we get the corresponding estimates on the average $3$-class number over quadratic fields ramified at $q = \prod p_i$. Given $F$ a quadratic extension over $k$, denote $h_3^{*}(F/k)$ to be the relative $3$-class number of $F$ over $k$.
   \begin{corollary}\label{unih}
   	Given a square-free integral ideal $q$, the $3$-class number summed over quadratic extensions $F/k$ with $q| \disc(F/k)$ is bounded by
   	$$\sum_{\substack{[F:k = 2]\\ q| \disc(F), \Disc(F) \le X}} h_3^*(F/k) = O(\frac{X}{|q|^{1/6-\epsilon}}).$$
   \end{corollary}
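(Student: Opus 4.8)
The plan is to translate the class number sum into a count of cubic fields via the Davenport–Heilbronn/Scholz reflection type correspondence, and then apply Theorem \ref{unipa} directly. First I would recall that for a quadratic extension $F/k$, the relative $3$-class number $h_3^*(F/k)$ counts (up to the obvious factor) the unramified-over-$F$ cyclic cubic extensions $M/F$ that are non-abelian over $k$; equivalently, by class field theory together with the standard bijection, the quantity $h_3^*(F/k)-1$ (or $\tfrac{1}{2}(h_3^*(F/k)-1)$, depending on normalization) equals the number of non-cyclic cubic fields $K/k$ whose quadratic resolvent is $F$ and which are \emph{unramified} at the finite places above $\disc(F/k)$ — the point being that such $K$ has $\disc(K/k)=\disc(F/k)$ up to the contribution away from the primes dividing $\disc(F/k)$. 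The precise statement I would cite is the one already used implicitly in \cite{DW88} and \cite{JW17}: summing $h_3^*(F/k)$ over quadratic $F$ with $\Disc(F)\le X$ is, up to a bounded multiplicative constant and a negligible main-order error, the same as counting $S_3$ cubic fields $K/k$ with $\Disc(K)\le X$.

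Next I would impose the divisibility condition $q\mid \disc(F/k)$. Since $K$ and its quadratic resolvent $F$ are ramified at exactly the same finite places (for $S_3$ cubic fields the discriminant of $K$ and of $F$ have the same radical), the condition $q\mid \disc(F/k)$ forces $K$ to be ramified at every place $p\mid q$. A cubic field ramified at $p$ is either partially ramified or totally ramified at $p$; write $q = q_1 q_2$ according to this split over the primes dividing $q$. Then the sum in question is bounded by
$$
\sum_{q_1 q_2 = q} N_{q_1, q_2}(S_3, X),
$$
and each term is controlled by combining Theorem \ref{unipa} with Theorem \ref{unito}: one has $N_{q_1,q_2}(S_3,X) = O(X / (|q_1|^{1/6-\epsilon}|q_2|^{2-\epsilon}))$, which is $O(X/|q|^{1/6-\epsilon})$ for every factorization since $|q_2|^{2-\epsilon} \ge |q_2|^{1/6-\epsilon}$. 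Summing over the at most $d(|q|) = O(|q|^\epsilon)$ factorizations absorbs into the $\epsilon$, giving the claimed bound.

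The main obstacle is being careful about the dictionary between $h_3^*(F/k)$ and cubic field counts when $F$ ramifies at the primes dividing $q$: one must check that requiring $K$ to be unramified at $p\mid q$ (so that $\disc(K/k)$ is essentially $\disc(F/k)$) is compatible with, rather than in tension with, the requirement that $F$ — hence $K$ — be ramified at $p\mid q$. The resolution is that $K$ is ramified at $p\mid q$ with the \emph{partial} ramification type corresponding to the inertia being generated by a transposition, which contributes $p^1$ to $\disc(K/k)$ and $p^1$ to $\disc(F/k)$ alike, so the discriminants still match and the count falls under $N_{q,1}(S_3,X)$ after all (the totally ramified contribution being an even smaller error term handled by Theorem \ref{unito}). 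Once this bookkeeping is pinned down, the corollary is immediate from Theorem \ref{unipa}.
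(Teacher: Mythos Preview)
Your final paragraph lands on the paper's argument exactly: since the cubic fields $K$ in the Davenport--Heilbronn correspondence for $h_3^*(F/k)$ are \emph{nowhere totally ramified} and satisfy $\disc(K/k)=\disc(F/k)$, the condition $q\mid\disc(F)$ forces $K$ to be \emph{partially} ramified at every $p\mid q$, so the count is bounded directly by $N_{q,1}(S_3,X)$ and Theorem~\ref{unipa} applies. That is precisely what the paper does.

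Two points to clean up. First, the detour in your middle paragraph --- splitting $q=q_1q_2$ and invoking $N_{q_1,q_2}(S_3,X)=O(X/|q_1|^{1/6-\epsilon}|q_2|^{2-\epsilon})$ --- is not something you can obtain just by ``combining'' Theorems~\ref{unito} and~\ref{unipa}: that product estimate is Theorem~\ref{unipato}, and the paper proves Theorem~\ref{unipato} \emph{using} Corollary~\ref{unih}, so citing it here would be circular. Fortunately, as you observe at the end, the totally ramified piece never arises in the $h_3^*$ correspondence, so the split is unnecessary and only Theorem~\ref{unipa} is needed. Second, the correspondence gives $\tfrac{1}{2}(h_3^*(F/k)-1)$ as a count of cubic fields; after bounding that piece you still owe a bound on $\sum_{q\mid\disc(F),\,\Disc(F)\le X}1$, which the paper handles separately via the quadratic uniformity estimate $O(X/|q|^{1-\epsilon})$ from \cite[Theorem~4.2]{JW17}. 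You allude to the normalization but should make this last step explicit.
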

   \begin{proof}   	
    By \cite{DW88}, there is a one-to-one correspondence between the unramified abelian cubic extensions $L/F$ such that the resulting Galois group of $\tilde{L}/k$ is $S_3$ and the isomorphism classes of nowhere totally ramified non-cyclic cubic extensions $K_3/k$. Moreover, in this correspondence, we have $\Disc(F) = \Disc(K_3)$. If $q| \disc(F)$, then the cubic field $K_3$ is partially ramified at $q$. Therefore
   	\begin{equation}
   	\begin{aligned}
   	\sum_{\substack{[F:k= 2]\\ q| \disc(F), \Disc(F) \le X}} \frac{h_3^*(F/k)-1 }{2}= O(\frac{X}{|q|^{1/6-\epsilon}}).
   	\end{aligned}
   	\end{equation} 
   	Indeed the left-hand side corresponds to the number of nowhere totally ramified $S_3$ cubic extensions which are partially ramified at $q$, and it is a subset of $S_3$ cubic extensions that are partially ramified at $q$. The right-hand side gives the upper bound on this number by Theorem  \ref{unipa}. Rearranging the expression, and applying Theorem $4.2$ \cite{JW17} on quadratic extensions
   	$$ \sum_{\substack{[F:k = 2]\\ q| \disc(F), \Disc(F) \le X}}  1 = O(\frac{X}{|q|^{1-\epsilon}}),$$
   	we have that 
   	$$\sum_{\substack{[F:k = 2]\\q| \disc(F), \Disc(F) \le X}} h_3^*(F/k) = O(\frac{X}{|q|^{1/6-\epsilon}}) + O(\frac{X}{|q|^{1-\epsilon}}) = O(\frac{X}{|q|^{1/6-\epsilon}}).$$       	 
   \end{proof}   
   And by combining the Theorem \ref{unito} and \ref{unipa} using class field theory, we prove the following theorem. 
   \begin{theorem}\label{unipato}
         	The number of non-cyclic cubic extensions over $k$ that are partially ramified at $q = \prod {p_i}$ and totally ramified at $r= \prod {p_j}$ is bounded by 
         	$$N_{q,r}(S_3, X) = O(\frac{X}{|q|^{1/6-\epsilon}|r|^{2-\epsilon}}),$$
         	for any number field $k$ and any square-free integral ideal $qr$. The constant is independent of $q$ and $r$, and only depends on $k$. 
   \end{theorem} 
   \begin{proof}
   		Let $F$ be a quadratic extension over $k$ and $q$ be an integral ideal that divides $\disc(F)$. Let $f$ be an integral ideal in $k$ and denote the conductor of an abelian cubic extension of $F$. We would like to count $S_3$ extensions that are partially ramified at $q$, so it suffices to look at quadratic fields $F$ with $q|\disc(F)$. We would also like to count $S_3$ extensions that are totally ramified at $r$, so it suffices to look at cubic abelian extensions over $F$ with conductor divided by $r$. By Lemma $6.2$ \cite{DW88}, the number of cubic extensions over $F$ with conductor $f$ such that the resulting Galois group over $k$ is $S_3$, could be bounded by $O(4^{\omega(f)} h_3^*(F/k))$ where $\omega(f)$ is the number of prime divisors of $f$, and the implied constant only depends on $k$. So we just need to bound
   		\begin{equation}
   		\begin{aligned}
   		&\sum_{\substack{[F:k] = 2\\ q| \disc(F)}} \sum_{\substack{ r|f\\ |f|^2 \Disc(F) \le X}} 4^{\omega(f)} h_3^*(F/k) \\
   		= & 4^{\omega(r)} \sum_{f}  4^{\omega(f)} \sum_{\substack{[F:k] = 2\\ q|\disc(F), \Disc(F) \le \frac{X}{|f|^2|r|^2}}} h_3^*(F/k)\\
   		\le & 4^{\omega(r)} \sum_{f}  4^{\omega(f)} \frac{X}{|f^2r^2| |q|^{1/6-\epsilon}}\\
   		\le & O(\frac{X}{|q|^{1/6-\epsilon}|r|^{2-\epsilon}}) \sum_{f}  \frac{4^{\omega(f)} }{|f|^2} \le O(\frac{X}{|q|^{1/6-\epsilon}|r|^{2-\epsilon}}).
   		\end{aligned}
   		\end{equation}
   	\end{proof}
   	
   	\begin{proof}[\textbf{Proof of Theorem \ref{unipa}}]
   		We will prove over $\mathbb{Q}$, and the result holds equally when the base field $k$ is an arbitrary number field by Theorem $4.7$ in \cite{JW17}.
   		
   		Firstly, recall that cubic orders are parametrized as $\text{GL}_2(\mathbb{Z})$-orbits of the space of binary cubic forms  $V(\mathbb{Z}) = \{ ax^3+ bx^2y+cxy^2 + dy^3 \mid(a,b,c,d)\in \mathbb{Z}^4 \}$. Please see details in section $2$ and $3$ in \cite{BST}. By Theorem $4.5$ in \cite{JW17}, let us denote $Y$ to be the variety that describes the ramification type introduced in \cite{Bhasieve}, we just need to integrate the the following integrand
   			\begin{equation}
   			\begin{aligned}           		
   			L^1 =  \sharp \{x \in mrB\cap V^{(i)}_{\mathbb{Z}} \mid  x (\text{mod } q) \in Y(\mathbb{Z}/ q\mathbb{Z})\} = O(C^{\omega(q)} )\cdot \max\{\frac{\lambda^4}{q}, \lambda^3t^3\},\\ 
   			\end{aligned}
   			\end{equation}
   		over the fundamental domain of $\text{GL}(\mathbb{R})/\text{GL}(\mathbb{Z})$ where $t\ge \sqrt[4]{3}/ \sqrt{2}$. Please see section $5$ in \cite{BST} for more details on the description of the fundamental domain. Let's denote $S$ to be the set of cubic orders that are ramified at $q$, then 
   		\begin{equation}
   		\begin{aligned}
   		N(S; X) &  \le O(C^{\omega(q)} )\frac{1}{M_i} \int^{O(X^{1/4})}_{\lambda = O(1)} \int^{O(\lambda^{1/3})}_{t=\sqrt[4]{3}/ \sqrt{2}} \max \{ \frac{\lambda^4}{q}, \lambda^3t^3 \} t^{-2}\text{d}t^{\times} \text{d}\lambda^{\times} \\
   		& = O(C^{\omega(q)} )\frac{1}{M_i} \int^{O(X^{1/4})}_{\lambda = O(1)} \max \{ \frac{\lambda^4}{q}, \lambda^3\lambda^{1/3} \} \text{d}\lambda^{\times}\\
   		&  = O(C^{\omega(q)} )\cdot  \max \{  \frac{X}{q}, X^{5/6} \} = O(C^{\omega(q)}) \cdot \max \{  \frac{X}{q}, X^{5/6} \}.\\
	    \end{aligned}
   		\end{equation}
     Since $|q|<X$, we have the number bounded by $O(\frac{X}{|q|^{1/6-\epsilon}})$. The global case follows similarly.   		
   	\end{proof}
   	
\section{Main Proof}
In this section we are going to prove Theorem \ref{Thm1} and Theorem \ref{Thm2}. We will give the outline of the proof in section \ref{sec41}. Then we will compute carefully what the error terms are for each step of summation in section \ref{sec42} and \ref{sec43}. In section \ref{sec44} we will determine the tail estimates based on the uniformity estimates. In section \ref{sec45} we will put all the estimates together and balance between the small range and the large range to optimize the exponent of the power saving error. In section \ref{sec46}, we compute the group theory data required as the final input to prove Theorem \ref{Thm1} and \ref{Thm2}. In section \ref{sec47}, as an example, we give the precise expression of the constant in the main term and the secondary term for $S_3\times C_l$ extensions where $l$ is a prime number. In section \ref{sec48}, we describe the amount of power saving away from the secondary term for cases in Theorem \ref{Thm1}, and the amount of power saving away from the main term for cases in Theorem \ref{Thm2}. 

\subsection{Framework}\label{sec41}
In this section, we are going to give a framework of the proof. Let $K$ be an $S_3$ cubic extension over $\mathbb{Q}$, and $L$ be an $A$ extension over $\mathbb{Q}$. Let $T$ be the set of all primes that divide $6|A|$. Define $\Sigma_p$ as follows: if $p\notin T $, let $\Sigma_p$ be the set of all possible non-trivial inertia groups for an $S_3$ cubic extensions up to conjugation; if $p \in T $, then let $\Sigma_p$ be the set of all possible local \'etale extensions over $\mathbb{Q}_p$ for an $S_3$ cubic extension. Similarly, we define $\Lambda_p$ for $A$-extensions at $p\notin T$ and $p\in T$ separately. Therefore define $\mathcal{A}=\{ \large \langle (12) \large \rangle ,  \large \langle (123) \large \rangle   \}$ and $\mathcal{B} = \{  \large \langle a \large \rangle \mid a \ne e \in A  \}$, then $\Sigma_p = \mathcal{A}$ and $\Lambda_p  =  \mathcal{B}$ for $p\notin T $. We will write $K \in \sigma_p$ for a certain $\sigma_p \in \Sigma_p$: if $K_p$ is isomorphic to $\sigma_p$ at $p\in T$, or if $\tilde{K}$ has $\sigma_p$ as the inertia group at $p\notin T$. Similarly for $L$. By the way $\Sigma_p$ and $\Lambda_p$ are defined, all $K\in\sigma_p$ have the same discriminant, $\Disc_p(K)$, so we could denote this number $\Disc(\sigma_p)$. Similarly for $\Disc(\lambda_p)$ for $A$ extensions. 

Given a pair of extensions $(K, L)$ where $\Gal(K) = S_3$ and $\Gal(L) = A$, by section $2.1$ and Theorem \ref{indge} we would be able to determine $\Disc_p(KL)$. At a certain $p$, say $K\in \sigma_p$ and $L\in \lambda_p$, then denote $\Disc(\sigma_p, \lambda_p)$ to be the local discriminant determined by the pair, and define $e(\sigma_p, \lambda_p)$ as
$$ p^{e(\sigma_p, \lambda_p)} = \frac{\Disc(\sigma_p)^m \Disc(\lambda_p)^n}{\Disc(\sigma_p, \lambda_p)}. $$ 
The exponent $e(\sigma_p, \lambda_p)$ for $p\notin T$ could be determine by Theorem \ref{indge}, and in such cases $e(\sigma_p, \lambda_p)$ is independent of $p$ and only depends on the permutation presentation of $\sigma_p$ and $\lambda_p$. 

Denote the set $\mathcal{S} = \mathcal{A}\times \mathcal{B}  = \{ s_{ij}\mid s_{ij} = (a_i, b_j), a_i\in \mathcal{A}, b_j\in \mathcal{B}\}$ to be the direct product of $\mathcal{A}$ and $\mathcal{B}$, and the set $\mathcal{W} = \prod_{p\in T}(\Sigma_p\times \Lambda_p) = \{ w \mid \forall p\in T,  w_p = (\sigma_p, \lambda_p) \in \Sigma_p\times \Lambda_p \}$. Here $\mathcal{S}$ lists all possible ramification types for a pair $(K, L)$ at tamely ramified places, and $\mathcal{W}$ lists all possible local \'etale extensions for a pair at wildly ramified places. Denote $\rho= (w, q_{ij})$ to be one element $w= \prod_{p\in T}(\sigma_p, \lambda_p) \in \mathcal{W}$ and a tuple of square-free numbers $\rho =  (q_{ij})$, where for each $1\le i\le |\mathcal{A}|$ and $1\le j\le |\mathcal{B}|$, and each $p| q_{ij}$ we have $p \notin T$, and $\prod_{i,j}p_{ij}$ is also square-free. For each $\alpha = (f_{ij}) \in( \mathbb{Z}/2\mathbb{Z})^{|\mathcal{S}|}$, we define $(K, L)\in \rho^{\alpha}$ as follows: 1) the pair $(K,L)$ satisfies the condition $w$ at all $p\in T$;  2) at each $p|q_{ij}$, we require $K\in a_i$ and $L\in b_j$; 3) if $f_{ij}= 0$, then we require further that $p| q_{ij}$ are the only primes that $(K, L)$ are simultaneously in $a_i$ and $b_j$.  

Define
$$B(\rho^{\alpha}, X) = \sharp \{ (K, L) \mid (K, L) \in \rho^{\alpha},  \Disc(K)^m\Disc(L)^3 \le X \} ,$$
where $m = |A|$. 
If $\alpha = 0 \in (\mathbb{Z}/2\mathbb{Z})^{|S|}$, then we get for $(K, L )\in \rho^{0}$ that
$$\Disc(KL) = \frac{\Disc(K)^m \Disc(L)^3}{\prod_{p\in T} p^{e(\sigma_p, \lambda_p)}\prod_{i,j} q_{ij}^{e(a_i, b_j)}} = \frac{\Disc(K)^m \Disc(L)^3}{L_{\rho}}.$$
Therefore if we could get an estimation of $B(\rho^{0}, X)$ for every $\rho$, we just need to sum $B(\rho^{0}, XL_{\rho}) $
over all $\rho$ in this form to get the final counting
$$G(X) = \sharp \{ (K, L ) \mid \Disc(KL) \le X  \} = \sum_{\rho} B(\rho^{0}, XL_{\rho}).$$

In order to get $B(\rho^{0}, X)$, we apply a sieve method. We will say that $\rho_1$ \textit{divides} $\rho_2$ if they contain the same $w \in \mathcal{W}$, and for each $i$ and $j$, we have $q^{(1)}_{ij} | q^{(2)}_{ij}$, where $q^{(k)}_{ij}$ is the associated square-free number at the $i,j$-th position in $\rho_k$. Given $\rho_1$ and $\rho_2$ with the same $w \in \mathcal{W}$, we can also multiply to get a new tuple $(\rho_1\rho_2)_{ij} = (q^{(1)}_{ij} q^{(2)}_{ij} )$ when it is legal, i.e., when the product $\prod_{i,j} q^{(1)}_{ij}q^{(2)}_{ij}$ is still square-free. By inclusion-exclusion, we have the following relation
\begin{equation}\label{incexc}
\begin{aligned}
B(\rho^{0}, X) = \sum_{\varrho = \rho \eta} \mu(\eta) B(\varrho^{1}, X),
\end{aligned}
\end{equation}
where we define $\mu(\eta)$ to be $\prod_{i,j} \mu(q_{ij})$ with $q_{ij}$ the $i,j$-th square-free integer in $\eta$. Here we write $\varrho^{1}$ in short for $\varrho^{(1,1, \cdots, 1)}$, which means that we require no condition on places outside $q_{ij}$ in $\varrho$. So we can apply product argument to distributions of $S_3$ cubic extensions and $|A|$-extensions with local conditions to get $B(\varrho^{1}, X)$. When $\rho$ involves some big primes, we will apply uniformity estimates to get a tail estimation on $B(\rho^{0}, X)$ and use that instead.  

\subsection{Estimates of $B(\varrho^1, X)$}\label{sec42}
In this section, we are going to compute the product distribution of $S_3$ cubic extensions and $A$-extensions, in addition with local conditions on ramification. Our computation heavily relies on the following theorem in \cite{BTT}, which improves previous results on distribution of $S_3$ cubic extensions with local density \cite{TT13}. On one hand it reduces the exponent of $X$ in error terms, and on the other hand, it also reduces the dependency of local parameters in the constant of the error terms. 

\begin{theorem}[\cite{BTT}, Theorem $4.3$]\label{S3local}
 The number of $S_3$ cubic extension that are partially ramified at $q = \prod p_i$ and totally ramified at $r = \prod p_j$ are estimated to be 
 $$N_{q,r}(S_3, X) = AA_qA_{r^2} X + B B_q B_{r^2} X^{5/6} + O(C_q C_{r^2} X^{2/3+\epsilon}),$$
 where the constants $A_n$, $B_n$ and $C_n$ are some multiplicative arithmetic functions.
\end{theorem}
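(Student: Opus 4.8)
The statement is quoted from \cite{BTT}, so nothing genuinely new has to be proved here; what follows is an outline of how a result of this shape is obtained, following the interleaving of the methods of \cite{TT13} and \cite{BST} carried out there.

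The plan is to reduce the count to a lattice-point problem with congruence conditions, and then count in two complementary ways. Using the parametrization of cubic rings by integral binary cubic forms together with the maximality refinements of \cite{BST}, isomorphism classes of $S_3$ cubic fields $K$ correspond (after marking a basis of $\mathcal{O}_K$, up to $\mathrm{GL}_2(\mathbb{Z})$) to $\mathrm{GL}_2(\mathbb{Z})$-orbits of irreducible maximal integral binary cubic forms $f$, with $\Disc(K) = |\disc(f)|$. For a tame prime $p \nmid 6$, the condition ``$p$ partially ramified in $K$'' (equivalently $p \,\|\, \disc(f)$) and ``$p$ totally ramified in $K$'' (equivalently $p^2 \,\|\, \disc(f)$) each single out a union of $\mathrm{GL}_2(\mathbb{Z}/p\mathbb{Z})$-orbits in $V(\mathbb{Z}/p\mathbb{Z})$, resp.\ $V(\mathbb{Z}/p^2\mathbb{Z})$. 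Imposing these over all $p\mid q$ and all $p\mid r$, along with the bounded local conditions at the primes dividing $6$ and the wild primes, turns $N_{q,r}(S_3,X)$ into the number of $\mathrm{GL}_2(\mathbb{Z})$-orbits of irreducible integral binary cubic forms of bounded discriminant lying in a prescribed union $\Omega_{q,r}$ of residue classes modulo $qr^2$.

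Next I would count $\Omega_{q,r}$-points in the dilate $X^{1/4}\mathcal{F}$ of a fundamental domain $\mathcal{F}$ for $\mathrm{GL}_2(\mathbb{R})/\mathrm{GL}_2(\mathbb{Z})$, following \cite{DH71,BST}: the main contribution is $\mathrm{Vol}(\mathcal{F})\cdot\mathrm{dens}(\Omega_{q,r})\cdot X$, and the $X^{5/6}$ secondary term of Roberts comes from the forms lying in the cusp of the reduction theory, whose count \cite{BST} evaluate by Poisson summation. Since $\Omega_{q,r}$ is cut out by independent conditions at the $p\mid q$ and the $p\mid r$, both the volume and the cusp density factor over $p$, producing the multiplicative factors $A_qA_{r^2}$ and $B_qB_{r^2}$. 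In parallel, the Shintani zeta function approach of \cite{TT13} attaches to $\Omega_{q,r}$ a Dirichlet series $\sum_{f\in\Omega_{q,r}}|\disc(f)|^{-s}$ built from Shintani's zeta functions for the prehomogeneous space of binary cubic forms (after a standard sieve removing non-maximal orders); it has a double pole at $s=1$ and a simple pole at $s=5/6$, and a contour shift recovers the same two terms with sharper and more uniform error. The role of \cite{BTT} is to merge these: the zeta-function input controls the $X^{5/6}$ term and its tail uniformly, while the geometry-of-numbers input supplies the sharp cutoff, and the splice yields the error exponent $2/3+\epsilon$.

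The technical heart is to make every error uniform in the level, with dependence only through multiplicative functions $C_n$ of at most polynomial growth. This demands: a uniform bound on reducible and non-maximal forms in $\Omega_{q,r}$ (a geometric-sieve/Ekedahl-type estimate as in \cite{Bhasieve}, of size $C^{\omega(q)}$, exactly of the kind used in Theorem \ref{unipa}); uniform control of the cusp contribution, i.e.\ of degenerate forms of small discriminant lying in a deep sublattice, which is where the secondary-term analysis is most delicate; and, on the zeta side, uniform estimates for the relevant twisted Shintani $L$-functions and the exponential sums over residues modulo $qr^2$. Putting these together, the $p$-local factors multiply and one obtains the stated shape with $C_qC_{r^2}$ in the error. \textbf{The main obstacle} is obtaining the secondary term \emph{uniformly} in $q$ and $r$: one needs the $X^{5/6}$ coefficient $BB_qB_{r^2}$, and not merely the main term, to depend on the level only through small powers. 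The pure geometry-of-numbers argument loses too much in the cusp when $q,r$ grow, and the pure zeta-function argument does not give a sharp enough cutoff; only the hybrid of \cite{TT13} and \cite{BST} achieves both simultaneously, which is precisely the content of \cite{BTT}.
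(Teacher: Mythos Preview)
Your proposal is correct: the paper does not prove this statement at all, since it is simply quoted from \cite{BTT} (Theorem~4.3) and used as a black box, with only the relevant local densities $A_p$, $B_p$, $C_p$ recorded afterward. Your recognition of this, together with your sketch of the hybrid geometry-of-numbers/Shintani-zeta argument underlying \cite{BTT}, is appropriate and matches how the result is actually obtained in that reference.
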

We record the above densities in the form as we need them. For a complete table of every local condition, please see $(6.8)$ and page $2487$ in \cite{TT13}. However we only need the local density on ramified cubic fields. In these cases, for each prime number $p$, 
$$A_p = \frac{C_p^{-1}}{p}, \quad A_{p^2} = \frac{C_p^{-1}}{p^2},$$
where $C_p =1+ p^{-1} + p^{-2}$ is the normalizing factor, and
$$B_p = \frac{K_p^{-1} (1 + p^{-1/3})^2}{p}, \quad B_{p^2} =  \frac{K_p^{-1} (1 + p^{-1/3})}{p^2},$$
where $K_p = \frac{(1-p^{-5/3})(1+p^{-1})}{1-p^{-1/3}}$ is the normalizing factor, and
$$C_p = p^{4/5}, \quad C_{p^2} = p^{4/5}.$$
The constants are $A = \frac{1}{3\zeta(3)}$ and $B =  (1+\sqrt{3}) \frac{4\zeta(1/3)}{5\Gamma(2/3)^3\zeta(5/3)}$. 

We will not need the precise expression of these constants until section \ref{sec47} where we compute explicit expression of the constants. The important input from this theorem for us is that the order of $C_qC_{r^2} \le O( \prod_{e|q} e^{4/5} \prod_{e|r} e^{4/5}) \le O(qr)^{4/5+\epsilon}$. We will keep this fact in mind, but write $C_q$ and $C_{r^2}$ on the way. 

Another input we need is counting results from abelian extensions. Number of abelian extensions with local density are studied in \cite{M85, Wri89, Woo10a}. We will mainly use the estimate of abelian extension in the form of the uniformity estimates. Denote $N_q(A, X)$ to be the number of $A$-extensions $L$ over $\mathbb{Q}$ such that $q| \Disc(L)$, then we have the following estimate. 
 \begin{theorem}[\cite{JW17}, Theorem $4.13$]\label{uniab}
 	Let $A$ be a finite abelian group and $q$ be an integer, then
 	 $$ N_q(A,X) \le O(\frac{C^{\omega(q)}}{q^{1/a(A)}}) X^{1/a(A)}(\ln X)^{b(k, A)-1},$$
   where $C$ and the implied constant only depends on $k$ but not on $q$.
 \end{theorem}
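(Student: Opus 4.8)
The plan is to reduce the counting of $A$-extensions $L$ with $q \mid \Disc(L)$ to a problem about characters, since every $A$-extension $L/k$ decomposes, via the conductor-discriminant formula, into a product of characters of $G_k$ whose product of conductors equals $\disc(L/k)$. First I would fix a surjection $G_k \twoheadrightarrow A$ and recall the standard parametrization: an $A$-extension is a surjective homomorphism $\phi\colon G_k \to A$ modulo $\Aut(A)$; writing $A \cong \prod_{i} C_{n_i}$ in invariant factors, $\phi$ is a tuple of characters and $\disc(L/k) = \prod_{\chi} \mathfrak{f}(\chi)$ over all nontrivial $\chi$ that factor through $\phi$. The key numerical fact is that $a(A) = 1/\sum_{\chi\ne 1}(1 - 1/\expo(\chi))$-type quantity (the Malle exponent), so $|\Disc(L)| \asymp \prod_{p \mid \disc(L)} p^{c_p}$ with the minimal local exponent $c_p$ giving exactly the exponent $a(A)$ in the main-term growth rate.

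The heart of the argument is the divisibility condition $q \mid \Disc(L)$: this forces, at each prime $p \mid q$, that $L/k$ is ramified at $p$, hence that at least one character $\chi$ in the tuple has $p \mid \mathfrak{f}(\chi)$. I would therefore set up a Dirichlet-series / Perron argument, or more simply an induction on the number of invariant factors of $A$, peeling off one cyclic character at a time. For a single cyclic group $C_n$ the count of characters of conductor-norm up to $Y$ divisible by $q$ is a classical computation (Wright \cite{Wri89}, or a direct $L$-function estimate): it is $O(Y^{1/a(C_n)}(\ln Y)^{b-1} \cdot q^{-1/a(C_n)} C^{\omega(q)})$, where the $C^{\omega(q)}$ absorbs the number of ways to distribute the ramification at $p \mid q$ among the possible local conductor exponents and the bounded local factors. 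For the general abelian case, one writes the discriminant as a product over invariant-factor characters and applies the product argument (a multivariable version of Lemma \ref{proneq}, or just iterated summation): the divisor $q$ is distributed over the factors as $q = \prod q_i$ with $\sum \omega(q_i) = \omega(q)$ (contributing another bounded-base factor to $C^{\omega(q)}$), each factor contributes its own power-saving in $q_i$, and the exponent $1/a(A)$ together with the log power $b(k,A)-1$ emerges as the dominant one exactly when the balance condition $\frac{n_1}{a} - \frac{n_2}{b} > 0$ of Lemma \ref{proneq} holds among the pieces — which it does by the definition of $a(A)$ as the minimum.

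The main obstacle I expect is bookkeeping the uniformity in $q$ correctly: one must ensure the implied constant genuinely does not depend on $q$, which means being careful that (i) the local densities at $p \mid q$ are uniformly bounded above and their product telescopes into the clean factor $q^{-1/a(A)}$ up to $C^{\omega(q)}$, and (ii) the distribution of $\omega(q)$ among the invariant-factor subproblems never loses more than a bounded-base-to-the-$\omega(q)$ factor. A secondary subtlety is handling the tamely-vs-wildly ramified primes dividing $q$ uniformly — at wild primes $p \mid q$ the exact local conductor exponent can be larger than the tame minimum, which only helps the power saving but must be checked not to spoil the $\ln X$ power; since $q < X$ one can always absorb such excess into the $q^{\epsilon}$ slack hidden in $C^{\omega(q)}$. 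Finally I would cite \cite{JW17}, Theorem $4.13$ directly if a self-contained rederivation is not wanted, as the statement is stated there; but the sketch above is the argument behind it.
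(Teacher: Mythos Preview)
The paper does not prove this theorem at all: it is quoted verbatim as \cite{JW17}, Theorem~4.13, and used as a black box in the subsequent estimates (see \eqref{uniabs} and Lemma~\ref{gvalue}). There is therefore no proof in the paper to compare your proposal against; your own final sentence already anticipates this, and the correct course in the present paper is exactly what you suggest there --- cite \cite{JW17} directly.

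As for the sketch itself, the overall shape (parametrize $A$-extensions by tuples of characters via class field theory, translate $q\mid\Disc(L)$ into ramification of at least one character at each $p\mid q$, and bound the resulting Dirichlet series factor by factor) is the standard route and is essentially what underlies the cited result. Two small cautions if you ever write it out in full: your description of $a(A)$ as ``$1/\sum_{\chi\ne 1}(1-1/\expo(\chi))$-type'' is not quite right --- in the regular representation $a(A)=\ind(A)=|A|(1-1/p)$ with $p$ the smallest prime divisor of $|A|$, and this is what governs the exponent $1/a(A)$; and the ``induction on invariant factors plus product lemma'' step needs the observation that the dominant character contribution already carries the full power of $q$ in its conductor, so that distributing $q$ over several factors never weakens the saving below $q^{-1/a(A)}$ up to the $C^{\omega(q)}$ loss. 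Neither point is a genuine gap, but both would need to be made precise in a standalone proof.
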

 
 As for the notation, we denote the Dirichlet series $f(s)$ for $S_3$ cubic fields
$$f(s) =  \sum_{\Gal(K/\mathbb{Q}) = S_3} \frac{1}{\Disc(K)^s},$$ 
and denote $F_1(X) = \sum_{\Disc(K) \le X} 1$. Given a local condition $\Sigma$ which contains $\sigma_p$ at finitely many primes, we write $K\in\Sigma$ if $K_p\in \Sigma$ at those places. 
We denote
$$f_{\Sigma}(s) = \sum_{\Gal(K/\mathbb{Q}) = S_3, K\in \Lambda} \frac{1}{\Disc(K)^s},$$ and denote $F_{1, \Sigma} (X) = \sum_{\Disc(K)\le X, K \in \Sigma} 1$. Similarly for $A$ extensions, we will use $F_2(X)$, $F_{2, \Lambda}(X)$, $g(s)$ and $g_{\Lambda}(s)$. 
%
Then Theorem \ref{uniab} says that given a local condition $\Lambda$ on ramification behavior,
we have that 
	\begin{equation}\label{uniabs}
	\begin{aligned}
F_{2, \Lambda}(X) = O_{\epsilon}(D_{\Lambda})X^{1/a(A)+\epsilon},
	\end{aligned}
	\end{equation}
where $D_{\Lambda}$ can be bounded by the order of $O(\frac{C^{\omega(q)}}{|q|^{1/a(A)}})$ with $q$ associated with $\Lambda$. For brevity we will write $O$ instead of $O_{\epsilon}$ since $O_{\epsilon}$ only depends on $\epsilon$ and is independent of $\Lambda$. 

Notice that given a tuple $\varrho$ of local conditions as defined before, there will be naturally induced local condition on $K$ and $L$, called $\Sigma(\varrho)$ and $\Lambda(\varrho)$. The local conditions that come from the same $\varrho$ have the same support outside $T$ with non-trivial ramification restriction. Indeed, say $q_{ij}$ are the $i,j$-th square-free number in $\varrho$, then $\Sigma(\varrho)$ restricts the counting to $S_3$ cubic extensions that are partially ramified at $q_1 = \prod_{j}q_{ij}$ and are totally ramified at $q_2 = \prod_{j} q_{2j}$. Similarly for $A$ extensions we have $q_j$ for $1\le j\le |\mathcal{B}|$. In addition, at primes in $T$, $\Sigma(\rho)$ (and $\Lambda(\rho)$) also contains the corresponding $\sigma_p$ (and $\lambda_p$) in $w\in \mathcal{W}$. For technical reasons, if we do not include the conditions at $T$ into $\Sigma$, then we denote the smaller local condition $\Sigma'$. For brevity, we will call the corresponding coefficients depending on $\Sigma$ and $\Lambda$ in Theorem \ref{S3local} and \ref{uniab} by $A_{\Sigma}$, $B_{\Sigma}$, $C_{\Sigma}$ and $D_{\Lambda}$ in short. If we restrict the local condition to places outside $T$, we get the corresponding coefficient $A_{\Sigma'}$, $B_{\Sigma'}$, $C_{\Sigma'}$ and $D_{\Lambda'}$. Then if $\varrho = \rho \eta$, then $A_{\Sigma(\varrho)} = A_{\Sigma(\rho)} A_{\Sigma'(\eta)}$.

Recall that 
$$B(\varrho^1, X) = \sharp \{ (K, L) \mid (K, L) \in \varrho^{1},  \Disc(K)^m\Disc(L)^3 \le X \},$$
and $\varrho^{1}$ naturally gives a set of local specification $\Sigma(\varrho)$ for $K$ and $\Lambda(\varrho)$ for $L$, so equivalently
$$B(\varrho^1, X) = \sharp \{ (K, L) \mid K \in \Sigma(\varrho), L \in \Lambda(\varrho),  \Disc(K)\Disc(L)^{3/m} \le X^{1/m} \},$$
which is the product distribution of $F_{1, \Sigma}(X)$ and $F_{2, \Lambda}(X)$. 

Let's say $f_{\Sigma}(s) = \sum_{n} a_n\cdot n^{-s}$ and $g_{\Lambda}(s) = \sum_{k} b_k\cdot k^{-s}$. Then we have that
\begin{equation}
\begin{aligned}
B(\varrho^1, X) =& \sum_{\substack{K\in \Sigma, L\in \Lambda\\ \Disc(K)^m\Disc(L)^3 \le X}} 1 = \sum_{k^3\le X} b_kF_{1, \Sigma}(\frac{X^{1/m}}{k^{3/m}})\\
=& \sum_{k\le X^{1/3}} b_k \left(AA_{\Sigma} \frac{X^{1/m}}{k^{3/m}}+ B B_{\Sigma} (\frac{X^{1/m}}{k^{3/m}})^{5/6} + O(C_{\Sigma} (\frac{X^{1/m}}{k^{3/m}})^{2/3})\right)\\
=& AA_{\Sigma}X^{1/m} \sum_{k\le X^{1/3}}\frac{b_k}{k^{3/m}} + B B_{\Sigma} X^{5/6m} \sum_{k\le X^{1/3}}\frac{b_k}{k^{3/m\cdot 5/6}} \\
& + O(C_{\Sigma} X^{2/3m} \sum_{k\le X^{1/3}}\frac{b_k}{k^{3/m\cdot 2/3}})\\
 =& AA_{\Sigma}\cdot g_{\Lambda}(\frac{3}{m})X^{1/m}+ B B_{\Sigma}\cdot  g_{\Lambda}(\frac{5}{2m})X^{5/6m} + O(C_{\Sigma} \cdot g_{\Lambda}(\frac{2}{m}))X^{2/3m} \\
& + AA_{\Sigma}X^{1/m} \sum_{k\ge X^{1/3}}\frac{b_k}{k^{3/m}} + B B_{\Sigma} X^{5/6m} \sum_{k\ge X^{1/3}}\frac{b_k}{k^{3/m\cdot 5/6}} \\
& + O(C_{\Sigma}X^{2/3m} \sum_{k\ge X^{1/3}}\frac{b_k}{k^{3/m\cdot 2/3}}).\\
\end{aligned}
\end{equation}
Notice that in the last equality, we can take those values of $g_{\Lambda}(s)$ at $ s= 3/m, 5/2m, 2/m$ since the right most pole of $g_{\Lambda}(s)$ is at $s = \frac{1}{\ind(A)}$, which is smaller than $\frac{2}{m}$. Aside from the first two precise terms which will be the main term and the secondary term, we will denote the following errors $E_1$, $E_a$, $E_b$ and $E_c$ and analyze them one by one. 

\subsubsection{Bound on $E_i$ for $i = a,b,c$}
Let's first look at $E_a$. It suffices to bound the following weighted sum of $b_k$. By Abel summation, 
\begin{equation}
\begin{aligned}
E_a = AA_{\Sigma}X^{1/m} \sum_{k\ge X^{1/3}}\frac{b_k}{k^{3/m}} & = AA_{\Sigma} X^{1/m} \left(-\frac{F_{2, \Lambda}(X^{1/3})}{X^{1/m}} + \frac{3}{m}\int_{X^{1/3}}^{\infty} \frac{F_{2, \Lambda}(t)}{t^{3/m+1}} dt\right)\\
& = O(A_{\Sigma} D_{\Lambda})X^{1/3a(A)+ \epsilon}.
\end{aligned}
\end{equation}
Similarly, we get for $E_b$ that 
\begin{equation}
\begin{aligned}
E_b & = O(B_{\Sigma} D_{\Lambda})X^{1/3a(A)+ \epsilon},
\end{aligned}
\end{equation}
and for $E_c$ that 
\begin{equation}
\begin{aligned}
E_c & = O(C_{\Sigma} D_{\Lambda})X^{1/3a(A)+ \epsilon}.
\end{aligned}
\end{equation}

By Theorem \ref{S3local}, $A_{\Sigma}$ and $B_{\Sigma}$ are precise constants determined and are the local densities at $s=1$ and $s=5/6$, while $C_{\Sigma}$ is the  upper bound of the dependency for the error in the order of $(qr)^{4/5+\epsilon}$. So $E_c$ is the biggest one among $E_a$, $E_b$ and $E_c$, and we can combine them
\begin{equation}\label{E2}
\begin{aligned}
E_2 = E_a + E_b+E_c \le O(C_{\Sigma} D_{\Lambda})X^{1/3a(A)+ \epsilon}.
\end{aligned}
\end{equation}

\subsubsection{Bound on $E_1$}
To bound 
$$E_1 = O(C_{\Sigma}\cdot g_{\Lambda}(\frac{2}{m}))X^{2/3m},$$
it suffices to give a bound on $g_{\Lambda}(\frac{2}{m})$. From now on, we denote $b_j$ to be the generator of a tamely ramified inertia group, i.e. $\large \langle b_j  \large \rangle \in \Lambda_p  = \mathcal{B}$ for $p\notin T$. When we write $\ind(b_j)$, we mean the index of the group element.
\begin{lemma}\label{gvalue}
	Let $\Lambda$ be a local condition on ramification for $A$ extensions and let $g_{\Lambda}$ be the corresponding Dirichlet series, we have that at $s> 1/a(A)$, the value of $g_{\Lambda}(s)$ is bounded by
	$$g_{\Lambda} (s) \le  O(\prod_{j} \prod_{p|q_{j}} \frac{C}{p^{\ind(b_j)s}}) \le O(D_{\Lambda})^{a(A)s},$$
	where $q_j$ is the product of primes where the inertia group is $\large \langle b_j  \large \rangle\in \mathcal{B}$. The implied constant depends on $s$ but not on $\Lambda$. 
\end{lemma}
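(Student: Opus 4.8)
The plan is to use the Euler--product structure of the Dirichlet series counting abelian extensions, supplied by class field theory, and to isolate the local Euler factors at the finitely many primes where $\Lambda$ prescribes the ramification.

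\emph{The local weight of the prescribed data.} Every prime $p\mid q_{j}$ lies outside $T$, hence is tamely ramified in every $A$-extension and is coprime to $|A|$; by the standard tame conductor--discriminant computation underlying Theorem \ref{indge}, an $A$-extension $L$ with inertia group $\langle b_{j}\rangle$ at $p$ has $p$-discriminant exponent exactly $\ind(b_{j})$ (the index of the cycle $b_{j}$ in the regular representation). Put $D_{q}=\prod_{j}q_{j}^{\ind(b_{j})}$. Then every $L$ counted by $g_{\Lambda}$ satisfies $D_{q}\mid\Disc(L)$ with $\Disc(L)/D_{q}$ coprime to $q=\prod_{j}q_{j}$, so that $g_{\Lambda}(s)=D_{q}^{-s}\,h_{\Lambda}(s)$ with $h_{\Lambda}(s)=\sum_{L\in\Lambda}(\Disc(L)/D_{q})^{-s}$. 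It therefore suffices to prove
\[
h_{\Lambda}(s)\ \le\ O_{s}\big(C^{\omega(q)}\big)\qquad\text{for a constant }C=C(A),
\]
uniformly in $\Lambda$: the first displayed bound of the lemma is then immediate, and the second, $g_{\Lambda}(s)\le O(D_{\Lambda})^{a(A)s}$, follows from it by elementary manipulations using $\ind(b_{j})\ge\ind(A)=a(A)\ge 1$, $s>1/a(A)$, and $D_{\Lambda}=O(C^{\omega(q)}q^{-1/a(A)})$.

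\emph{Euler products.} By class field theory, $A$-extensions of $\mathbb{Q}$ correspond, up to the free action of $\operatorname{Aut}(A)$, to surjective continuous homomorphisms $\phi$ from the idele class group of $\mathbb{Q}$ to $A$, and $\Disc(L_{\phi})=\prod_{p}p^{d_{p}(\phi_{p})}$, where the exponent $d_{p}(\phi_{p})$ depends only on the local component $\phi_{p}\colon\mathbb{Q}_{p}^{\times}\to A$ (local class field theory). Since $L_{\phi}\in\Lambda$ forces $d_{p}(\phi_{p})=\ind(b_{j})$ for $p\mid q_{j}$, pulling out the primes dividing $q$ gives
\[
g_{\Lambda}(s)\ \le\ \Big(\prod_{j}\prod_{p\mid q_{j}}p^{-\ind(b_{j})s}\Big)\sum_{\substack{\phi\ \mathrm{surj}\\ L_{\phi}\in\Lambda}}\ \prod_{p\nmid q}p^{-d_{p}(\phi_{p})s}.
\]
Two observations close the argument. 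First, for each $p\mid q_{j}$ the number of local homomorphisms $\phi_{p}\colon\mathbb{Q}_{p}^{\times}\to A$ whose inertia image is exactly the fixed cyclic group $\langle b_{j}\rangle$ is at most $|A|^{2}=:C$, uniformly in $p$ (the image of a uniformizer is free, while a tame-inertia generator must map to a generator of $\langle b_{j}\rangle$). Second, once a tuple of local components at $q$ is fixed, the remaining components $(\phi_{p})_{p\nmid q}$ range over a subset of the restricted product $\prod'_{p\nmid q}\operatorname{Hom}(\mathbb{Q}_{p}^{\times},A)$; enlarging the sum to that whole restricted product, and dropping the surjectivity constraint, only increases it, so
\[
\sum_{\substack{\phi\ \mathrm{surj}\\ L_{\phi}\in\Lambda}}\ \prod_{p\nmid q}p^{-d_{p}(\phi_{p})s}\ \le\ C^{\omega(q)}\prod_{p\nmid q}\Big(\sum_{\phi_{p}}p^{-d_{p}(\phi_{p})s}\Big)\ \le\ C^{\omega(q)}\,\Phi(s),
\]
where $\Phi(s)=\prod_{p}\big(\sum_{\phi_{p}}p^{-d_{p}(\phi_{p})s}\big)$. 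For real $s>1/a(A)$ this is an absolutely convergent Euler product: a ramified tame $\phi_{p}$ has $d_{p}(\phi_{p})\ge\ind(A)=a(A)$, and there are $O(1)$ choices of $\phi_{p}$ with any fixed exponent, so each Euler factor equals $1+O(p^{-a(A)s})$. Hence $h_{\Lambda}(s)\le C^{\omega(q)}\Phi(s)=O_{s}(C^{\omega(q)})$, as required.

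\emph{Main difficulty, and an alternative.} The delicate point is the class-field-theory bookkeeping: a global $\phi$ is not an arbitrary point of the restricted product of local homomorphisms (there are global compatibility and surjectivity constraints), and one must use that replacing the true global sum by the full restricted product, and by not-necessarily-surjective $\phi$, is legitimate precisely because it enlarges the sum, while the prescribed ramification at $q$ costs only the bounded combinatorial factor $C^{\omega(q)}$ on top of the exact weight $D_{q}^{-s}$. A softer route avoids class field theory altogether: insert the uniformity estimate \eqref{uniabs} --- in the sharper form $F_{2,\Lambda}(Y)=O\big(C^{\omega(q)}D_{q}^{-1/a(A)}\big)Y^{1/a(A)}(\ln Y)^{b(k,A)-1}$, reflecting the true discriminant weight of the prescribed ramification --- into the partial-summation identity $h_{\Lambda}(s)=s\int_{1}^{\infty}F_{2,\Lambda}(YD_{q})\,Y^{-s-1}\rd Y$; this reproduces the bound up to an inessential factor polynomial in $\ln D_{q}$, which is harmless for every use of the lemma.
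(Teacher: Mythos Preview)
Your approach is essentially the paper's: bound $g_{\Lambda}$ by an Euler product coming from class field theory, then isolate the local factors at the primes in $q$. The paper does exactly this, writing $g_{\Lambda}(s)\le\tilde g_{\Lambda}(s)$ for a genuine Euler product $\tilde g_{\Lambda}$ and comparing it to the unconditional product $\tilde g(s)$ via the ratio of local factors.

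There is, however, a genuine gap in your execution. You parametrize by local components $\phi_p:\mathbb{Q}_p^{\times}\to A$ and then enlarge the sum over global $\phi$ to the full restricted product $\prod'_p\operatorname{Hom}(\mathbb{Q}_p^{\times},A)$. Your claim that the resulting Euler factor at $p$ equals $1+O(p^{-a(A)s})$ is false: for each $p$ there are $|A|$ \emph{unramified} homomorphisms $\mathbb{Q}_p^{\times}\to A$ (one for every possible image of a uniformizer), all with $d_p=0$, so the Euler factor is $|A|+O(p^{-a(A)s})$ and $\Phi(s)=\prod_p\bigl(|A|+\cdots\bigr)$ diverges. The step ``enlarging the sum only increases it'' is formally correct but hands you an infinite upper bound; the global constraint from $\mathbb{Q}^{*}$ is precisely what kills these spurious unramified choices, and you have discarded it.

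The fix is exactly what the paper uses. Over $\mathbb{Q}$ one has the splitting $J_{\mathbb{Q}}/\bigl(\mathbb{Q}^{*}\mathbb{R}_{>0}\bigr)\cong\hat{\mathbb{Z}}^{\times}=\prod_p\mathbb{Z}_p^{\times}$, so continuous homomorphisms to the finite group $A$ are in bijection with tuples $(\rho_p)_p$ where $\rho_p:\mathbb{Z}_p^{\times}\to A$. Now the unique unramified local map is the trivial one, each Euler factor is genuinely $1+O(p^{-a(A)s})$, and the product $\tilde g(s)$ converges for $s>1/a(A)$. Your combinatorial count at primes dividing $q$ then works with $C=|A|$ rather than $|A|^2$ (there is no uniformizer to choose), and the remainder of your argument goes through verbatim. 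Your alternative partial-summation route via Theorem~\ref{uniab} is also sound, as you note, up to logarithmic factors that are harmless for the application.
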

\begin{proof}
	Denote $J_{\mathbb{Q}}$ to be the id\`ele group of $\mathbb{Q}$. Notice that we can bound the number of A-extensions by the number of continous homomorphisms $ J_{\mathbb{Q}}/\mathbb{Q}^*\to A$, and it is equivalent to consider maps $\rho: \prod_p \mathbb{Z}_p^*\to A $ \cite{Wood14}. Local conditions on the abelian extensions could also be formulated by local conditions on $\rho$. At places outside $T$, the condition of $\Lambda$ is equivalent to the condition that the image of $\mathbb{Z}_p^{\times}$ under $\rho$ in $A$ is exactly specified as $\lambda_p\in \Lambda_p$. So we can also write $\rho\in\Lambda$ to specify the local condition on $\rho$.
	 We then have
	$$g_{\Lambda}(s) = \sum_{ K\in\Lambda} \frac{1}{\Disc(K)^s} \le \sum_{\rho : \rho\in \Lambda} \frac{1}{\Disc(\rho)^s} = \prod_p( \sum_{\rho_p: \mathbb{Z}_p^{\times}\to A , \rho_p\in\Lambda} \frac{1}{\Disc(\rho_p)^s}) = \tilde{g}_{\Lambda}(s).$$
	If $s> 1/a(A)$, then $g(s)$ and $\tilde{g}_{\Lambda}(s)$ are both convergent by \cite{M85, Wri89, Woo10a}. Also since $\tilde{g}$ and $\tilde{g}_{\Lambda}$ are both multiplicative, we can get the estimate for $\tilde{g}_{\Lambda}$ easily,
	$$\tilde{g}_{\Lambda}(s) = \tilde{g}(s) \cdot \frac{ \prod_{p}( \sum_{\rho_p: \mathbb{Z}_p^{\times}\to A , \rho_p\in\Lambda} \frac{1}{\Disc(\rho_p)^s}) }{ \prod_p( \sum_{\rho_p: \mathbb{Z}_p^{\times}\to A } \frac{1}{\Disc(\rho_p)^s}) } \le \tilde{g}(s)\cdot O(D_{\Lambda})^{a(A)s}.$$
\end{proof}
Plugging in the value from Lemma \ref{gvalue}, we get that 
\begin{equation}
\begin{aligned}
E_1 &= O(C_{\Sigma} \cdot D_{\Lambda}^{2a(A)/m})X^{2/3m}.
\end{aligned}
\end{equation}
Comparing with (\ref{E2}), we have that $$E = E_2+E_1 \le O(C_{\Sigma} \cdot D_{\Lambda}^{2a(A)/m})X^{2/3m}.$$ 

\subsection{Estimates of $B(\rho^0, XL_\rho)$ for Small $\rho$}\label{sec43}
In this section, we are going to compute the error for $B(\rho^0, X)$ which only involves small primes. Recall in (\ref{incexc}) that
$$
B(\rho^{0}, X) = \sum_{\varrho = \rho \eta} \mu(\eta) B(\varrho^{1}, X),
$$
where we define $\mu(\eta)$ to be $\prod_{i,j} \mu(k_{ij})$ with $k_{ij}$ is the $i,j$-th square-free integer in $\eta$. We expect the main terms from $B(\varrho^1, X)$ to contribute to the main term, so we will only look at the error terms. Denote $\Sigma(\rho)$ to be $\Sigma$ induced by $\rho$ and similarly for $\Lambda(\rho)$. Like we define $\Disc(\sigma_p)$, we could also define $\Disc(\Sigma(\rho))$ to be $\prod_p \Disc(\sigma_p)$ where the product is over all $p\in T$ and $p| \rho_{ij}$ for all $i$ and $j$, and define $\Disc(\Sigma'(\rho))$ to be $\prod_p \Disc(\sigma_p)$ where the product is over all $p| \rho_{ij}$ for all $i$ and $j$, then $\Disc(\Sigma(\rho \eta)) = \Disc(\Sigma(\rho)) \Disc(\Sigma'(\eta))$.

Notice that $B(\varrho^1, X) = 0$ when $\eta$ involves primes that are too large since 
\begin{equation}\label{Discbound}
\begin{aligned}
\Disc(\Sigma'(\eta))^m \Disc(\Lambda'(\eta))^3 = \prod_{i,j} k_{ij}^{m\ind(a_i) + 3\ind (b_j)} = k^{\beta}\le X^{*} = \frac{X}{\Disc(\Sigma(\rho))^m \Disc(\Lambda(\rho))^3}.
\end{aligned}
\end{equation}
For brevity we write $k = (k_{ij})$ as a vector and $\beta = (\beta_{ij}) = (m\ind(a_i) + 3\ind(b_j))$ as a vector of exponent. So there are two sources of error: one comes from the small $\eta$ where we apply the sieve; and the other one comes from the big $\eta$ where we pretend to have precise terms. 

For small $\eta$, by the inclusion-exclusion, the error is 
\begin{equation}
\begin{aligned}
W_1 & = \sum_{\substack{\varrho = \rho \eta\\ k^{\beta}< X^*}} \mu(\eta) O(C_{\Sigma(\varrho)}\cdot D_{\Lambda(\varrho)}^{2a(A)/m})X^{2/3m}\\
& \le O(C_{\Sigma(\rho)} \cdot D_{\Lambda(\rho)}^{2a(A)/m}) \cdot X^{2/3m} \sum_{ k^{\beta}<X^*} C_{\Sigma(\eta)} D_{\Lambda(\eta)}^{2a(A)/m}.
\end{aligned}
\end{equation}
The last inequality comes from the fact that $C_{\Sigma}$ and $D_{\Lambda}$ are multiplicative up to $O(1)$ at most. 

For big $\eta$, although $B(\varrho^1, X) = 0$, we would still like to use the main term and the secondary term in the same form. In order to compensate for that, we have the error coming from the main term
\begin{equation}
\begin{aligned}
W_2 & = \sum_{\substack{\varrho = \rho \eta\\ k^{\beta}> X^*}} O(A_{\Sigma(\varrho)}\cdot g_{\Lambda(\varrho)}(\frac{3}{m})X^{1/m}) \le O(A_{\Sigma(\rho)} \cdot D_{\Lambda(\rho)}^{3a(A)/m}) \cdot X^{1/m} \sum_{ k^{\beta}>X^*} A_{\Sigma(\eta)} D_{\Lambda(\eta)}^{3a(A)/m},\\
\end{aligned}
\end{equation}
and similarly for the secondary term,
\begin{equation}
\begin{aligned}
W_3 & = \sum_{\substack{\varrho = \rho \eta\\ k^{\beta}> X^*}} O(B_{\Sigma(\varrho)}\cdot  g_{\Lambda(\varrho)}(\frac{5}{2m}))X^{5/6m}\le O(B_{\Sigma(\rho)} \cdot D_{\Lambda(\rho)}^{5a(A)/2m}) \cdot X^{5/6m} \sum_{ k^{\beta}>X^*} B_{\Sigma(\eta)} D_{\Lambda(\eta)}^{5a(A)/2m}.
\end{aligned}
\end{equation}
\subsubsection{Bound on $W_1$}
We look into the following sum
$$ R_1 = \sum_{k^{\beta}<X^*} C_{\Sigma(\eta)} D_{\Lambda(\eta)}^{2a(A)/m}.$$
To be more precise, recall that $k_{ij}$ is the $i,j$-th square-free number for $\eta$, then 
%
\begin{equation}\label{Cexpl}
\begin{aligned}
C_{\Sigma(\eta)} =O( \prod_{j} k_{1j}^a \prod_{j} k_{2j}^b),
\end{aligned}
\end{equation}
where $a$ and $b$ are such numbers that $C_p = p^a$ and $C_{p^2} = p^b$ in Theorem \ref{S3local}. We know from Theorem \ref{S3local} that we can take $a = b = 4/5$. We will keep $a$ and $b$ to see how much we need from them. For $\Lambda$, 
\begin{equation}\label{Dexpl}
\begin{aligned}
D_{\Lambda(\eta)}^{2a(A)/m} = O\left(\prod_{j} \prod_{i} \prod_{p|k_{ij}} C' p^{-2\ind(b_j)/m}\right) \le O_{\epsilon}\left(\prod_{j} (\prod_{i} k_{ij})^{-2\ind(b_j)/m +\epsilon}\right),
\end{aligned}
\end{equation}
where $C' = C^{-2a(A)/m}$ is a new absolute constant depending only on $A$. So the sum $R_1$ could be bounded by a sum of multi-variable polynomial over a bounded region, 
\begin{equation}
\begin{aligned}
R_1 \le O(\sum_{k^{\beta} \le X^{*}} k^{\gamma}).
\end{aligned}
\end{equation}
Here $\beta$ and $\gamma$ could be determined by (\ref{Cexpl}), (\ref{Dexpl}) and (\ref{Discbound}). The summation is considered in the following elementary calculus result. It can be proved by direct computation. 
\begin{lemma}\label{calcweight}
	Given a vector of component $\beta$ and $\gamma$ such that $\beta_{i}>0$ for all $1\le i\le n$, if there exists $i$ such that $\gamma_i \ge -1$, then the following summation is bounded
	\begin{equation}
	\begin{aligned}
	\sum_{k^{\beta} \le X} k^{\gamma} \le O_{\epsilon}(X^{a(\beta, \gamma)+\epsilon}),
	\end{aligned}
	\end{equation}
	where $a(\beta, \gamma) = \max_{1\le  i\le n} \{ \frac{\gamma_i+1}{\beta_i} \}$. If $\gamma_i < -1$ for all $i$, then the sum is bounded by $O(1)$. 
\end{lemma}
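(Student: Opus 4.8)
The plan is to induct on the number of variables $n$, peeling off one coordinate at a time. First I would handle the base case $n=1$: the sum $\sum_{k_1^{\beta_1}\le X} k_1^{\gamma_1}$ runs over $k_1 \le X^{1/\beta_1}$, and by comparison with an integral (or the standard estimate $\sum_{k\le Y} k^{\gamma} = O(Y^{\gamma+1})$ when $\gamma \ge -1$, and $= O(1)$ when $\gamma < -1$, the latter using $\gamma_1 \ge -1$ is \emph{not} assumed but if $\gamma_1<-1$ the tail converges) one gets $O(X^{(\gamma_1+1)/\beta_1 + \epsilon})$ when $\gamma_1 \ge -1$ and $O(1)$ otherwise; the $\epsilon$ absorbs the boundary case $\gamma_1 = -1$ where a logarithm appears. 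This matches $a(\beta,\gamma) = (\gamma_1+1)/\beta_1$.

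For the inductive step, I would fix the last variable $k_n$ and write
\begin{equation*}
\sum_{k^{\beta}\le X} k^{\gamma} = \sum_{k_n \ge 1} k_n^{\gamma_n} \sum_{(k_1,\dots,k_{n-1})^{\beta'} \le X/k_n^{\beta_n}} (k_1,\dots,k_{n-1})^{\gamma'},
\end{equation*}
where $\beta' = (\beta_1,\dots,\beta_{n-1})$ and $\gamma' = (\gamma_1,\dots,\gamma_{n-1})$. The inner sum is nonempty only for $k_n \le X^{1/\beta_n}$, and by the inductive hypothesis it is $O_\epsilon\big((X/k_n^{\beta_n})^{a(\beta',\gamma')+\epsilon}\big)$ if some $\gamma_i \ge -1$ among the first $n-1$ coordinates, and $O(1)$ otherwise. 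Substituting this in, the outer sum becomes (up to the $\epsilon$) either $X^{a(\beta',\gamma')}\sum_{k_n \le X^{1/\beta_n}} k_n^{\gamma_n - \beta_n a(\beta',\gamma')}$ or $\sum_{k_n \le X^{1/\beta_n}} k_n^{\gamma_n}$, and in each case one applies the one-variable estimate again. A short case analysis on the sign of the resulting exponent of $k_n$ shows the total is $O_\epsilon(X^{a(\beta,\gamma)+\epsilon})$ with $a(\beta,\gamma) = \max\{a(\beta',\gamma'), (\gamma_n+1)/\beta_n\}$, which unwinds to $\max_i (\gamma_i+1)/\beta_i$; and if every $\gamma_i < -1$, each nested tail converges and the bound is $O(1)$. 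The hypothesis $\beta_i > 0$ is used throughout to ensure the constraint region is a genuine bounded simplex-like set in log-coordinates, and the hypothesis "$\gamma_i \ge -1$ for some $i$" is precisely what guarantees the exponent $a(\beta,\gamma)$ is attained by a coordinate that actually contributes a power of $X$ rather than a convergent constant.

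The only mildly delicate point — the part I would be most careful about — is the boundary behavior when one of the intermediate exponents equals $-1$ exactly, producing $\ln X$ factors; these are harmless since they are swallowed by $X^\epsilon$, but one should state the induction so that the $\epsilon$ is allowed to grow by a fixed amount at each of the $n$ steps (still an absolute constant since $n$ is fixed), or simply fix $\epsilon$ at the start and track that finitely many logarithms cost at most $X^{\epsilon}$. Since $n$ here is bounded (it equals $|\mathcal{S}| = |\mathcal{A}|\cdot|\mathcal{B}|$, a fixed number depending only on $A$), there is no uniformity issue. No real obstacle; this is a routine induction, which is why the paper says "It can be proved by direct computation."
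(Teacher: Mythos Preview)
Your inductive argument is correct and carries through exactly as you describe; the case analysis on the sign of $\gamma_n - \beta_n\,a(\beta',\gamma')$ (and the separate treatment when all first $n-1$ exponents are below $-1$) recovers $a(\beta,\gamma)=\max_i(\gamma_i+1)/\beta_i$, with logarithms at the boundary absorbed into $X^{\epsilon}$. The paper itself does not give a proof---it simply remarks that the lemma ``can be proved by direct computation''---so there is nothing to compare against; your write-up is a clean way to supply the omitted details.
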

In our case, $\beta$ and $\gamma$ are indexed by $i$ and $j$. The quotient is computed to be
$$\frac{\gamma_{ij} +1}{\beta_{ij}} = \frac{a - 2\ind(b_j)/m +1}{m\ind(a_i)+3\ind(b_j)},$$
for $i=1$, and similarly for $i = 2$
$$\frac{\gamma_{ij} +1}{\beta_{ij}} = \frac{b - 2\ind(b_j)/m +1}{m\ind(a_i)+3\ind(b_j)},$$ after plug in (\ref{Cexpl}), (\ref{Dexpl}) and (\ref{Discbound}). Observe that if the numerator is positive, then this quantity is largest when $\ind(b_j) = \ind(A)$, i.e., 
$$a(\beta, \gamma) m = \max \left\{\frac{a- 2\ind(A)/m +1}{1+ 3\ind(A)/m},  \frac{b- 2\ind(A)/m +1}{2+ 3\ind(A)/m} \right  \}.$$ Since we have in Theorem \ref{S3local} that $C_p = p^a = C_{q^2}$ for $a = b= 4/5$, in our situations, the quantity is also largest when $\ind(a_i) = \ind(A)$, i.e., 
$$a(\beta, \gamma) m =\frac{a- 2\ind(A)/m +1}{1+ 3\ind(A)/m}.$$
%
It is possible that the above expression is negative for some $A$. In that case, the summation $R_1$ is $O(1)$, so we define $a(\beta, \gamma) = 0$ for such $A$. 

Plugging in $R_1$, we get $W_1$ for $B(\rho^0, X)$ that
\begin{equation}
\begin{aligned}
W_1 & \le O(C_{\Sigma(\rho)} \cdot D_{\Lambda(\rho)}^{2a(A)/m}) \cdot X^{2/3m} (X^*)^{ a(\beta, \gamma) +\epsilon}.
\end{aligned}
\end{equation}

\subsubsection{Bound on $W_2$ and $W_3$}
In this subsection, we look into $W_2$ and $W_3$ in a similar way, and we will show that they are small. Therefore only the error from small $\eta$ makes main contribution to the error of $B(\rho^0, X)$. 

Denote 
$$R_2 = \sum_{ k^{\beta}>X^*} A_{\Sigma(\eta)} D_{\Lambda(\eta)}^{3a(A)/m}.$$

We will need a similar lemma to deal with $R_2$. 
\begin{lemma}\label{calcweight2}
	Given a vector of component $\beta$ and $\gamma$ such that $\beta_{i}>0$ for all $1\le i\le n$, if $\gamma_i < -1$ for all $i$, then the following summation is bounded
	\begin{equation}
	\begin{aligned}
	\sum_{k^{\beta} \ge X} k^{\gamma} \le O_{\epsilon}(X^{a(\beta, \gamma)+\epsilon}),
	\end{aligned}
	\end{equation}
	where $a(\beta, \gamma) = \max_{1\le  i\le n} \{ \frac{\gamma_i+1}{\beta_i} \}$.
\end{lemma}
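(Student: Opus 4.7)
The plan is to argue by induction on the number of variables $n$, mirroring Lemma \ref{calcweight} but in the tail direction. For the base case $n=1$, the condition $k_1^{\beta_1}\ge X$ becomes $k_1\ge X^{1/\beta_1}$, and since $\gamma_1<-1$ the tail sum $\sum_{k_1\ge X^{1/\beta_1}} k_1^{\gamma_1}$ converges by integral comparison to $O(X^{(\gamma_1+1)/\beta_1})$, matching the claimed exponent exactly.

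For the inductive step, I would reorder coordinates so that the maximum defining $a(\beta,\gamma)$ is attained at $i=1$, i.e.\ $a(\beta,\gamma)=(\gamma_1+1)/\beta_1$, and then perform the $k_1$-sum first with $k_2,\dots,k_n$ frozen. This splits the region $k^\beta\ge X$ into two pieces depending on whether $\prod_{i\ge 2} k_i^{\beta_i}\ge X$ or $\prod_{i\ge 2} k_i^{\beta_i}< X$. In the first piece the constraint on $k_1$ is already vacuous, the inner sum $\sum_{k_1\ge 1}k_1^{\gamma_1}$ is $O(1)$, and what remains is precisely the $(n-1)$-variable version of the same statement, which the induction hypothesis bounds by $O(X^{a+\epsilon})$ with $a=\max_{i\ge 2}(\gamma_i+1)/\beta_i\le a(\beta,\gamma)$. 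In the second piece the $k_1$-tail is $O\bigl((X/\prod_{i\ge 2}k_i^{\beta_i})^{(\gamma_1+1)/\beta_1}\bigr)$, again because $\gamma_1<-1$.

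After pulling the factor $X^{(\gamma_1+1)/\beta_1}$ out of the second piece, one is left with a head-type sum $\sum_{\prod_{i\ge 2}k_i^{\beta_i}<X}\prod_{i\ge 2}k_i^{\gamma_i-\beta_i(\gamma_1+1)/\beta_1}$ over a bounded box. The crucial observation, and what I expect to be the only delicate step, is that the maximality of $(\gamma_1+1)/\beta_1$ translates into the inequality $\gamma_i-\beta_i(\gamma_1+1)/\beta_1\le -1$ for every $i\ge 2$, with equality possible when the max is attained multiple times. This exponent sits exactly at the borderline of convergence, so each one-dimensional partial sum up to $X^{1/\beta_i}$ contributes at most a single $\log X$ factor; the product of the $n-1$ such factors is absorbed harmlessly into $X^\epsilon$. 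Combining the two pieces then yields $O_\epsilon(X^{a(\beta,\gamma)+\epsilon})$, closing the induction. Once the exponent inequality at the borderline is verified the rest is bookkeeping, so this is really where the whole argument rests.
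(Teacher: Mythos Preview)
Your induction argument is correct: the base case is an integral comparison, the split according to whether $\prod_{i\ge 2}k_i^{\beta_i}$ already exceeds $X$ cleanly separates the two contributions, and your key inequality $\gamma_i-\beta_i(\gamma_1+1)/\beta_1\le -1$ for $i\ge 2$ follows immediately from the maximality of $(\gamma_1+1)/\beta_1$, so the residual head sum is bounded by a product of at most $n-1$ logarithms as you say. The paper itself gives no proof of this lemma (nor of the companion Lemma~\ref{calcweight}, which it dismisses with ``can be proved by direct computation''), so there is nothing to compare against; your write-up simply supplies the omitted details.
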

The exponent $\beta'_{ij}$ is the same as $\beta_{ij}$ in $R_1$, but the exponent $\gamma'_{ij}$ is different. By description of $A_{\Sigma}$ and $D_{\Lambda}$, the quotient is
$$\frac{\gamma'_{ij} +1}{\beta'_{ij}} = \frac{-\ind(a_i) - 3\ind(b_j)/m +1}{m\ind(a_i)+3\ind(b_j)} = \frac{1}{m}(-1+\frac{1}{\ind(a_i) + 3\ind(b_j)/m})\le \frac{-3\ind(A)/m}{m + 3\ind(A)},$$
where in the last inequality we take $\ind(a_i) = \ind(S_3)$ and $\ind(b_j) = \ind(A)$.
Therefore $$a(\beta', \gamma') m = \frac{-3\ind(A)/m}{ 1+ 3\ind(A)/m}.$$
By Lemma \ref{calcweight2} and description of $A_{\Sigma(\rho)}$, $D_{\Lambda(\rho)}$, we have
\begin{equation}
\begin{aligned}
W_2 & = O(A_{\Sigma(\rho)} \cdot D_{\Lambda(\rho)}^{3a(A)/m}) \cdot X^{1/m} \cdot (X^*)^{a(\beta', \gamma') +\epsilon} \le O(X^*)^{1/m + a(\beta', \gamma') + \epsilon}.
\end{aligned}
\end{equation}
Similarly for $W_3$, the exponent
$$a(\beta'', \gamma'') m= \frac{-5\ind(A)/2m}{ 1+ 3\ind(A)/m} =\frac{5}{6} \cdot a(\beta', \gamma')m,$$
and
\begin{equation}
\begin{aligned}
W_3 & = O(B_{\Sigma(\rho)} \cdot D_{\Lambda(\rho)}^{5a(A)/2m}) \cdot X^{5/6m}  (X^*)^{a(\beta'', \gamma'')+\epsilon} \le O( \Disc(\Sigma(\rho))^{-1/6} (X^*)^{5/6m + a(\beta'', \gamma'') + \epsilon}).
\end{aligned}
\end{equation}
Therefore the bound on $W_3$ is smaller than that of $W_2$, so it suffices to compare that of $W_2$ with $W_1$. Notice that 
$$-\frac{1}{3} + a(\beta, \gamma)m - a(\beta', \gamma')m \ge \frac{a+ 2/3}{1+ 3\ind(A)/m}>0 ,$$
we have $$W_2 \le O( C_{\Sigma(\rho)} \Disc(\Sigma(\rho))^{2/3}  )( X^{*})^{2/3m + a(\beta, \gamma) +\epsilon} = O(C_{\Sigma(\rho)} \cdot D_{\Lambda(\rho)}^{2a(A)/m}) \cdot X^{2/3m} (X^*)^{ a(\beta, \gamma) +\epsilon},$$
so 
$$W_1 + W_2 + W_3 \le O(C_{\Sigma(\rho)} \cdot D_{\Lambda(\rho)}^{2a(A)/m}) \cdot X^{2/3m} (X^*)^{ a(\beta, \gamma) +\epsilon}.$$ 

\subsubsection{Error for $B(\rho^0, XL_\rho)$}
Finally, we replace $X$ with $XL_{\rho}$ in $B(\rho^{0}, XL_{\rho})$ and denote the error by $E_{\rho}$. Plugging in
$$X^{*} =  \frac{XL_{\rho}}{\Disc(\Sigma(\rho))^m \Disc(\Lambda(\rho))^3},$$
we get
\begin{equation}
\begin{aligned}
E_{\rho} & \le O(C_{\Sigma(\rho)} \cdot D_{\Lambda(\rho)}^{2a(A)/m}) \cdot (XL_{\rho})^{2/3m} O_{\epsilon}( \frac{XL_{\rho}}{\Disc(\Sigma(\rho))^m \Disc(\Lambda(\rho))^3})^{a(\beta, \gamma)+\epsilon}\\
& \le O_{\epsilon}(X^{2/3m + a(\beta, \gamma)+\epsilon} ) O(C_{\Sigma(\rho)}  D_{\Lambda(\rho)}^{2a(A)/m}  L_{\rho}^{2/3m + a(\beta, \gamma) + \epsilon})  (\Disc(\Sigma(\rho))^m \Disc(\Lambda(\rho))^3)^{-a(\beta, \gamma) -\epsilon}\\
& \le O_{\epsilon}(X^{2/3m + a(\beta, \gamma)+\epsilon} ) \prod_{i,j} k_{ij}^{e(i,j)},
\end{aligned}
\end{equation}
where $$e(1,j) = a+ 2/3 - \ind((12)(3), b_j)\cdot (2/3m + a(\beta, \gamma)) + \epsilon,$$ and $$e(2, j) = b+ 4/3 - \ind((123), b_j)\cdot (2/3m + a(\beta, \gamma))+ \epsilon.$$
Here $\ind((12)(3), bj)$ means the index of the group element $((12)(3), b_j) \in S_3\times A$. 
\subsection{Estimates of $B(\rho^0, XL_{\rho})$ for Large $\rho$}\label{sec44}
In this section, we will use uniformity estimates to determine the tail estimate for $B(\rho^{0}, XL_{\rho})$ for $\rho$. The expression will hold uniformly for all $\rho$, but it will be especially helpful when $\rho$ involves relatively larger prime numbers. 

Recall in Theorem \ref{unipato} and \ref{uniab}, we get uniformity estimates for $S_3$ cubic extension and $A$-extension with restriction on ramification, which states that 
$$ \sharp \{ K \mid \Gal(K) = S_3, K\in \Sigma(\rho), \Disc(K )\le X  \}  = O (\frac{X}{\prod_j k_{1j}^{1/6-\epsilon} \prod_jk_{2j}^{2-\epsilon}}),$$
and 
$$ \sharp \{ L \mid \Gal(L) = A, L\in \Lambda(\rho), \Disc(K )\le X  \}  = O (\frac{X^{1/a(A) +\epsilon}}{\prod_j (k_{1j}k_{2j})^{\ind(b_j)/a(A)-\epsilon}}).$$

Since $\rho^0$ requires more restriction on places outside $k_{ij}$, we have that
$$B(\rho^0, XL_{\rho}) \le B(\rho^1, XL_{\rho}).$$
Applying Theorem \ref{proneq} on $\Disc_{res}(K) = \frac{\Disc(K)}{\Disc(\Sigma(\rho))}$ and $\Disc_{res}(L) = \frac{\Disc(L)}{\Disc(\Lambda(\rho))}$, we get

\begin{equation}
\begin{aligned}
& B(\rho^1, XL_{\rho}) \\
= & \sharp \{ (K, L )\mid K\in \Sigma(\rho), L\in \Lambda(\rho), \Disc_{res}(K)^m \Disc_{res}(L)^3 \le \frac{XL_\rho}{ \Disc(\Sigma(\rho))^m \Disc(\Lambda(\rho))^3}   \}\\
= & \sharp \{ (K, L )\mid K\in \Sigma(\rho), L\in \Lambda(\rho), \Disc_{res}(K)^m \Disc_{res}(L)^3 \le \frac{X}{ \Disc(\Sigma(\rho), \Lambda(\rho))} \}\\
\le & O(\prod_j k_{1j}^{5/6+\epsilon} \prod_jk_{2j}^{\epsilon}) (\frac{X}{ \Disc(\Sigma(\rho), \Lambda(\rho))})^{1/m} \\
= & O(X^{1/m}) \prod_{i,j} k_{ij}^{d(i,j)},
\end{aligned}
\end{equation}
where $$d(1,j) = \frac{5}{6} + \epsilon -\ind((12)(3), b_j)/m,$$ and $$d(2,j) = \epsilon - \ind((123), b_j)/m.$$ These tail estimates will all be error terms, and we will denote it by
$$D_{\rho} = B(\rho^0, XL_{\rho}) \le B(\rho^1, XL_{\rho}) \le O(X^{1/m}) \prod_{i,j} k_{ij}^{d(i,j)}.$$
\subsection{Optimization}\label{sec45}
In this section, we will combine the error estimates in previous sections, and balance between errors in the small range and the large range to optimize the  error overall.

Recall that in the small range we get the error 
$$E_{\rho} = O(X^{2/3m + a(\beta, \gamma)+\epsilon} ) \prod_{i,j} k_{ij}^{e(i,j)},$$
and in the large range we get the error
$$D_{\rho} = O(X^{1/m}) \prod_{i,j} k_{ij}^{d(i,j)}.$$
So to take advantage of both estimate, we will use the sieve argument when $$E_{\rho} \le D_{\rho},$$
which is equivalent to
$$\prod_{i,j} k_{ij}^{\delta(i,j)} \le X^{1/3m -a(\beta, \gamma) -\epsilon} = Q,$$
where $\delta(i,j) = e(i,j)-d(i,j)$.


So the error overall will be
\begin{equation}
\begin{aligned}
E & = \sum_{\substack{\rho\\ \prod_{i,j} k_{i,j}^{\delta(i,j)} \le Q}} E_{\rho} + \sum_{\substack{\rho\\ \prod_{i,j} k_{i,j}^{\delta(i,j)} \ge Q}} D_{\rho}\\
& = E_S + E_L
\end{aligned}
\end{equation}

\begin{enumerate}
	\item  \textbf{Estimates for $E_S$}\\
The sum $E_S$ for the small range is
\begin{equation}
\begin{aligned}
E_S & =O_{\epsilon}(X^{2/3m + a(\beta, \gamma)+\epsilon} )\sum_{\substack{\rho\\ \prod_{i,j} k_{ij}^{\delta(i,j)} \le Q}} 
\prod_{i,j} k_{ij}^{e(i,j)}\\
& = O_{\epsilon}(X^{2/3m + a(\beta, \gamma)+ \epsilon})\cdot Q^{ \max \{  \frac{e(i,j)+1}{\delta(i,j)}     \}}.
\end{aligned}
\end{equation}
For the second equality, we apply Lemma \ref{calcweight} since there exists $e(i,j)> -1$ and for all $i$ and $j$, $\delta(i,j) > 0$.

\item  \textbf{Estimates for $E_L$}\\
The sum $E_L$ for the large range is
\begin{equation}
\begin{aligned}
E_L & =O(X^{1/m})  \sum_{\substack{\rho\\ \prod_{i,j} k_{ij}^{\delta(i,j)} \ge Q}} 
\prod_{i,j} k_{ij}^{d(i,j)}\\
& = O(X^{1/m})\cdot Q^{ \max \{  \frac{d(i,j)+1}{\delta(i,j)} \}}.
\end{aligned}
\end{equation}
For the second equality, we apply Lemma \ref{calcweight2} since $d(i,j)< -1$ and $\delta(i,j) > 0$ for all $i$ and $j$.
\end{enumerate}

To sum up, for the small range, we use estimates with precise first term, secondary term and an error term $E_{\rho}$; for large range, we use estimates which is purely error term $D_{\rho}$. Since the first term and secondary term are both small comparing to $D_{\rho}$,
$$O(B_{\Sigma}\cdot  g_{\Lambda}(\frac{5}{2m}))(XL_{\rho})^{5/6m}\le O(A_{\Sigma}\cdot g_{\Lambda}(\frac{3}{m}))(XL_{\rho})^{1/m} \le O(X^{1/m}) \prod_{i,j} k_{ij}^{d(i,j)},$$
by comparing the exponent for each $k_{ij}$, we could pretend that we use estimates with a precise main term and a secondary term with the error $D_{\rho}$ without harm. Finally we get that the error is
\begin{equation}
\begin{aligned}
E & =  O_{\epsilon}(X^{2/3m + a(\beta, \gamma)+ \epsilon})\cdot Q^{ \max \{  \frac{e(i,j)+1}{\delta(i,j)}\}} + O(X^{1/m})\cdot Q^{ \max \{  \frac{d(i,j)+1}{\delta(i,j)} \}},
\end{aligned}
\end{equation}
where $Q = X^{1/3m -a(\beta, \gamma) -\epsilon}$, and $a(\beta, \gamma)$, $e(i,j)$, $d(i,j)$ and $\delta(i,j)$ are constants depending on $A$. 

Therefore finally it reduces to the question if we could show for $A$ that 
\begin{equation}\label{inposas}
\begin{aligned}
\frac{2}{3m} + a(\beta, \gamma) + \epsilon + (\frac{1}{3m}-a(\beta, \gamma)-\epsilon)\cdot \max_{i,j} \{ \frac{e(i,j) +1}{\delta(i,j)}\} < \frac{1}{m},
\end{aligned}
\end{equation}
and 
\begin{equation}\label{inposal}
\begin{aligned}
\frac{1}{m}+ (\frac{1}{3m}-a(\beta, \gamma)-\epsilon)\cdot \max_{i,j} \{ \frac{d(i,j) +1}{\delta(i,j)}\} < \frac{1}{m}.
\end{aligned}
\end{equation}
If we could show the above inequalities for $A$, then we succeed in proving a power saving error for $N(S_3\times A, X)$. Moreover, if we could show the two inequalities with the right hand side replaced by $\frac{5}{6m}$,
\begin{equation}\label{inseds}
\begin{aligned}
\frac{2}{3m} + a(\beta, \gamma) +\epsilon + (\frac{1}{3m}-a(\beta, \gamma)-\epsilon)\cdot \max_{i,j} \{ \frac{e(i,j) +1}{\delta(i,j)}\} < \frac{5}{6m},
\end{aligned}
\end{equation}
and 
\begin{equation}\label{insedl}
\begin{aligned}
\frac{1}{m}+ (\frac{1}{3m}-a(\beta, \gamma)-\epsilon)\cdot \max_{i,j} \{ \frac{d(i,j) +1}{\delta(i,j)}\} < \frac{5}{6m},
\end{aligned}
\end{equation} then we will succeed in saving the secondary term in the order of $X^{5/6m}$. Since the inequality are all strict, we could totally ignore those $\epsilon$ since they could be arbitrarily small. 



\subsection{Proof of the Main Theorem}\label{sec46}
In this section, we will prove the main theorem by verifying (\ref{inposas}),(\ref{inposal}), (\ref{inseds}) and (\ref{inseds}). To do that, we will compute explicitly the quantities of these parameters of an abelian group $A$: $a(\beta, \gamma)$, $e(i,j)$, $d(i,j)$ and $\delta(i,j)$. In the following discussion, let us denote an important quantity associated to $A$ by
$$\Delta = \frac{\ind(A)}{m} = \frac{p-1}{p},$$
in which $p$ is the smallest prime divisor of $m$. 

Firstly, recall that if the following quantity is positive then
$$a(\beta, \gamma) = \frac{a- 2\ind(A)/m +1}{m+ 3\ind(A)},$$
otherwise, 
$$a(\beta, \gamma) = 0.$$
By solving for $a(\beta, \gamma) = 0$ and plugging in $a = 4/5$, we get that if $p> 7$, then $a(\beta, \gamma) = 0$. On the other hand, if $p = 3, 5, 7$, then
$$a(\beta, \gamma) m = \frac{a-2\Delta +1}{1+ 3\Delta}.$$
Secondly, recall that we have
 $$e(1,j) = a+ 2/3 - \ind((12)(3), b_j)\cdot (2/3m + a(\beta, \gamma)) + \epsilon,$$ $$e(2, j) = b+ 4/3 - \ind((123), b_j)\cdot (2/3m + a(\beta, \gamma))+ \epsilon,$$
 $$d(1,j) = \frac{5}{6} + \epsilon -\ind((12)(3), b_j)/m,$$$$d(2,j) = \epsilon - \ind((123), b_j)/m,$$
 where $a = b = 4/5$ as in Theorem \ref{S3local}. 
 We will need the following lemma.
 \begin{lemma}[\cite{JW17}, Lemma $2.4$]\label{delta3}
 	Let $A$ be an abelian group of odd order $m$ and $(12)$, $(123)$ be elements in $S_3$. Then for all $c\in A$, $\ind((12),c)/m > 2$, $\ind((123),c)/m>1$.
 \end{lemma}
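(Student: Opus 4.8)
The plan is to verify Lemma \ref{delta3} by direct computation, distinguishing the two cases according to whether the $S_3$-component is a transposition $(12)$ or a $3$-cycle $(123)$, and exploiting that $A$ has \emph{odd} order so that the order of any $c\in A$ is coprime to $2$ (and, in the transposition case, that $2\mid m$ fails). Recall that the index of a permutation $g$ acting on $N$ letters is $N-\sharp\{\text{orbits of }g\}$, and that for a product action $g_1\times g_2$ on $n_1 n_2$ letters the cycle structure is governed by $\gcd$ and $\mathrm{lcm}$ of cycle lengths: if $g_1$ is a product of $k$ disjoint cycles of lengths $\ell_1,\dots,\ell_k$ and $g_2$ a product of $l$ cycles of lengths $m_1,\dots,m_l$, then $g_1\times g_2$ has $\sum_{s,t}\gcd(\ell_s,m_t)$ orbits on the $n_1 n_2$ letters, hence index $n_1 n_2-\sum_{s,t}\gcd(\ell_s,m_t)$. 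This is exactly the content underlying Theorem \ref{indge}, so I may quote that formula.

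First I would treat the $3$-cycle case $g_1=(123)$, which is a single cycle of length $3$ on $3$ letters ($k=1$, $\ell_1=3$). Write $c\in A$ as a product of cycles of lengths $m_1,\dots,m_l$ on the $m$ letters of the regular representation (so $\sum_t m_t=m$ and $l=\sum_t m/\mathrm{ord}(c)\cdot$(number of cycles), but concretely $c$ acting on $A$ by translation is a product of $m/e$ cycles each of length $e$ where $e=\mathrm{ord}(c)$). Then $\ind((123),c)=3m-\sum_t\gcd(3,m_t)=3m-\sum_t\gcd(3,e)$. Since $m$ is odd, if $3\nmid e$ then every $\gcd(3,e)=1$ and there are $m/e\cdot 1$... more carefully, $c$ has $m/e$ cycles, so $\sum\gcd(3,e)=(m/e)\gcd(3,e)$, giving $\ind=3m-(m/e)\gcd(3,e)$. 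If $3\nmid e$ this is $3m-m/e\ge 3m-m=2m>m$ (using $e\ge 1$, and $e>1$ unless $c=e$, but even $e=1$ gives $2m$); if $3\mid e$ it is $3m-3m/e=3m(1-1/e)\ge 3m(1-1/3)=2m>m$. So $\ind((123),c)/m>1$ in all cases, with the bound $\ge 2$ essentially forced; I would double-check the edge case and state the strict inequality cleanly. For the transposition case $g_1=(12)(3)$ on $3$ letters, this is one $2$-cycle and one fixed point ($k=2$, $\ell_1=2,\ell_2=1$), so $\ind((12),c)=3m-\sum_t(\gcd(2,e)+\gcd(1,e))=3m-\sum_t(\gcd(2,e)+1)=3m-(m/e)(\gcd(2,e)+1)$. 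Because $m$ is odd, $e$ is odd, so $\gcd(2,e)=1$, giving $\ind((12),c)=3m-2m/e\ge 3m-2m=m$, and strictly $>2m$ when $e>2$, i.e. whenever... again I need $e\ge 3$ for the strict bound $>2$; since $A$ is odd of order $>1$ in the relevant applications, but to be safe the claim $\ind((12),c)/m>2$ needs $e\ge 3$, which holds because $e$ is odd and $e\ne 1$ would be needed — so I should note that $c=e$ is excluded (the statement concerns nontrivial ramification) or handle $e=1$ separately where $\ind((12),e)=3m-2m=m$; likely the intended reading restricts to $c\ne e$, and then $e\ge 3$ gives $\ind((12),c)=3m-2m/e> 3m-m=2m$.

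The main obstacle, such as it is, is bookkeeping rather than depth: correctly reading off the cycle type of a group element $c$ acting on $A$ by left translation (it is $m/\mathrm{ord}(c)$ cycles each of length $\mathrm{ord}(c)$), correctly invoking the $\gcd$-sum orbit-counting formula from Theorem \ref{indge} in the product action, and being careful about whether $c=e$ is permitted — since if $c=e$ then $(123)\times e$ has index $3m-m=2m$ and $(12)\times e$ has index $3m-2m=m$, which would \emph{fail} the strict inequality $\ind((12),c)/m>2$. I would therefore open the proof by noting that $c$ ranges over nontrivial elements (consistent with "nontrivial inertia groups" in the framework of section \ref{sec41}), so $\mathrm{ord}(c)\ge 3$ since $m$ is odd, and then the two displayed computations above each yield the claimed strict inequalities in one line. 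I expect the whole argument to fit in under half a page.
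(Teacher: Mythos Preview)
The paper does not prove this lemma at all; it merely cites \cite{JW17}, Lemma~2.4, so there is no in-paper argument to compare against. Your direct computation via the orbit-counting formula of Theorem~\ref{indge} is exactly the natural proof, and your arithmetic is correct: writing $e=\mathrm{ord}(c)$, the element $c$ in the regular representation of $A$ is a product of $m/e$ cycles of length $e$, whence $\ind((123),c)=3m-(m/e)\gcd(3,e)\ge 2m$ and, using that $e$ is odd, $\ind((12),c)=3m-2m/e$.

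You are also right to flag the edge case: as literally stated the inequality $\ind((12),c)/m>2$ fails at $c=e$ (it gives $1$), so the intended quantification is over nontrivial $c$. This is consistent with how the lemma is invoked in the paper---the elements $b_j$ there are generators of nontrivial inertia groups, i.e.\ $b_j\in A\setminus\{e\}$, so $e\ge 3$ and your bound $3m-2m/e\ge 7m/3>2m$ goes through. Just state this hypothesis explicitly at the outset and the half-page proof you outline is complete.
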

 
 \begin{proof}[\textbf{Proof of Theorem \ref{Thm1} and Theorem \ref{Thm2}}]
 	
 It suffices to prove the inequality (\ref{inseds}) and (\ref{insedl}) for $p>5$ and (\ref{inposas}) and (\ref{inposal}) for $p = 3, 5$. The key quantity is the maximum of $\frac{e(i,j) +1}{\delta(i,j)}$ and $\frac{d(i,j) +1 }{\delta(i,j)}$ over $j$ for $i = 1, 2$. 
 We will call them $U_i$ and $V_i$ for $i = 1, 2$ correspondingly. Notice that for each fixed $i,j$
 $$
 \frac{2}{3m} + a(\beta, \gamma) + (\frac{1}{3m}-a(\beta, \gamma))\cdot  \frac{e(i,j) +1}{\delta(i,j)} = \frac{1}{m}+ (\frac{1}{3m}-a(\beta, \gamma))\cdot \frac{d(i,j) +1}{\delta(i,j)}, 
 $$
 and in all of our cases, we can check that the maximum value of $U_i$ and $V_i$ are obtained when $\ind(b_j) = \ind(A)$. Therefore to check (\ref{inseds}) is equivalent to check (\ref{insedl}) and similarly for (\ref{inposas}) and (\ref{inposal}). It suffices to check for $U_i$. 
 
 When $p>7$, by Theorem \ref{indge} we have 
 $$\frac{e(1,j)+1}{\delta(1,j)} = \frac{1+ a - 4\ind(b_j)/3m+\epsilon}{a+1/6+ 2\ind(b_j)/3m} \le \frac{1+a - 4\Delta/3 + \epsilon}{1/6 +a +2\Delta/3} = U_1,$$
 where the maximum is taken when $\ind(b_j) = \ind(A)$. Similarly,
  $$\frac{e(2,j)+1}{\delta(2,j)} \le \frac{1+b - 2\Delta/3+\epsilon}{2+b+\Delta/3} = U_2.$$
%
So the inequality (\ref{inseds}) becomes purely dependent on $\Delta$: 
\begin{equation}\label{ingt7}
\begin{aligned}
\frac{2}{3} +\epsilon + (\frac{1}{3}-\epsilon)\cdot U_i  & < \frac{5}{6},
\end{aligned}
\end{equation} 
for $i = 1,2$. We can check this holds for $p> 7$. 

When $p = 5, 7$, we have 
$$U_1 = \frac{1+a - 4\Delta/3 - (a-2\Delta +1)(1+2\Delta)(1+3\Delta)^{-1} +\epsilon}{1/6 +a +2\Delta/3 -(a-2\Delta +1)(1+2\Delta)(1+3\Delta)^{-1}},$$
$$U_2 =\frac{1+b - 2\Delta/3 - (a-2\Delta +1)(2+\Delta)(1+3\Delta)^{-1}  + \epsilon}{2+b+\Delta/3 - (a-2\Delta +1)(2+\Delta)(1+3\Delta)^{-1}  }.$$
It suffices to check the following holds
\begin{equation}\label{in57}
\begin{aligned}
\frac{2}{3} + \frac{a-2\Delta +1}{1+ 3\Delta} +\epsilon + (\frac{1}{3}-\frac{a-2\Delta +1}{1+ 3\Delta}-\epsilon)\cdot U_i  & < \frac{5}{6},
\end{aligned}
\end{equation} 
for $p = 7$, and when $p = 5$ the inequality holds when the right hand side is $1$, which finishes the proof of Theorem\ref{Thm1} and Theorem \ref{Thm2} for $A$ with $p>3$. 

When $ p = 3$, we just need to compute more carefully. Now $$a(\beta, \gamma) m =  \frac{a-1/3}{3} = \frac{7}{45},$$
and the inequality for $U_1$ remains the same since $(12)(3)$ has order $2$, which is relatively prime to order of $g$ for any $g\in A$. So it suffices to check (\ref{in57}) holds for $U_1$ when $\Delta = 2/3$ and the right hand side is $1$. For $U_2$, plugging $a(\beta, \gamma)m = 7/45$ into (\ref{in57}) and rearranging the terms, it suffices to check that
\begin{equation}\label{in3}
\begin{aligned}
 \frac{e(2,j)+1}{\delta(2,j)} < 1,\\
 \frac{d(2,j)+1}{\delta(2,j)} <0,
\end{aligned}
\end{equation} 
which is equivalent to
$$d(2,j) = \epsilon -\ind ((123), b_j)/m < -1.$$
It follows from the Lemma \ref{delta3}.
\end{proof}

\subsection{Constants for the Main Term and the Secondary Term}\label{sec47}
In this section, we are going to compute the precise constants for the main term and the secondary term when $A$ is a cyclic group with prime order $m = l$ for $l > 5$. 

We will first consider all continuous homomorphisms $G_{\mathbb{Q}}\to C_l$ instead of $C_l$ extensions of $\mathbb{Q}$ for simplicity of computation of main term. The two quantity differ by a trivial map up to an action of $\text{Aut}(C_l)$. The generating series for such maps is
$$g(s) = \left(1 + (l+1)l^{-2(l-1)s}\right)\prod_{p\ne l, p\equiv 1 \text{ mod }l} \left(1 + (l-1) p^{-(l-1)s}\right). $$

Recall that the precise terms for $B(\varrho^1, XL_{\rho})$ is
$$AA_{\Sigma(\varrho)}\cdot g_{\Lambda(\varrho)}(\frac{3}{m})(XL_{\rho})^{1/m} + B B_{\Sigma(\varrho)}\cdot  g_{\Lambda(\varrho)}(\frac{5}{2m})(XL_{\rho})^{5/6m},$$
so the main term for $B(\rho^0, XL_{\rho})$ is
$$
A(XL_{\rho})^{1/m} A_{\Sigma(\rho)}\cdot  g_{\Lambda(\rho)}(\frac{3}{m})  \sum_{\eta}\mu(\eta) A_{\Sigma'(\eta)}\cdot g_{\Lambda'(\eta)}(\frac{3}{m}),$$
and the secondary term for $B(\rho^0, XL_{\rho})$ is
$$ B(XL_{\rho})^{5/6m}B_{\Sigma(\rho)}\cdot  g_{\Lambda(\rho)}(\frac{5}{2m}) \sum_{\eta} \mu(\eta)B_{\Sigma'(\eta)}\cdot  g_{\Lambda'(\eta)}(\frac{5}{2m}),
$$
where both sums are over all $\eta$ that is relatively prime to $\rho$. Finally we sum over all $\rho$ and get the main term for the whole counting 
$$ AX^{1/m}  \sum_{\rho} L_{\rho}^{1/m} A_{\Sigma(\rho)}\cdot g_{\Lambda(\rho)}(\frac{3}{m})  \sum_{\eta}\mu(\eta) A_{\Sigma'(\eta)}\cdot g_{\Lambda'(\eta)}(\frac{3}{m}),$$
with a secondary term
$$ BX^{5/6m} \sum_{\rho} L_{\rho}^{5/6m} B_{\Sigma(\rho)}\cdot  g_{\Lambda(\rho)}(\frac{5}{2m}) \sum_{\eta} \mu(\eta)B_{\Sigma'(\eta)}\cdot  g_{\Lambda'(\eta)}(\frac{5}{2m}),$$
where we sum over all possible $\rho$. 

In this specific case, an extension $L$ with the Galois group $C_l$ could only be wildly ramified at $l$. At $l>3$, an $S_3$ cubic extension $K$ could be tamely ramified so $l$ is the only place we need be careful about wildly ramification. On the other hand $C_l$ is cyclic with prime order, so there is only one type of tamely ramification, and $S_3$ has two types of tamely ramification, so the tamely ramification part in a local condition $\rho$, could be parametrized by a pair of relatively prime square-free integers, say $q$ and $r$. Similarly for $\eta$, say $k$ and $l$, with $klpq$ square-free. So plugging in all the constant we get the coefficient for the main term:

\begin{equation}\label{mainterm}
\begin{aligned}
\mathcal{C}_1 = &A \sum_{\rho} L_{\rho}^{1/m} A_{\Sigma(\rho)}\cdot g_{\Lambda(\rho)}(\frac{3}{m})  \sum_{\eta}\mu(\eta) A_{\Sigma'(\eta)}\cdot g_{\Lambda'(\eta)}(\frac{3}{m})\\
= & \frac{1}{3\zeta(3)} \cdot c_l\cdot  \sum_{\substack{q,r,k,l \\ \forall p|qrkl, p\equiv 1 \text{ mod } l}}  \mu(k) \mu(l)\frac{1}{q^{\Delta} r^{2\Delta}} \cdot \prod_{p|qk} \frac{C_p^{-1}}{p} \cdot \prod_{p|rl}\frac{C_p^{-1} }{p^2} \cdot g_(\frac{3}{l}) \prod_{p|qrkl} \frac{(l-1)p^{-3\Delta}}{1+ (l-1)p^{-3\Delta}}\\
= & \frac{g(3/l)}{3\zeta(3)} \cdot c_l\cdot\prod_{p \equiv 1 \text{ mod }l } \left\{1 + p^{-\Delta}\cdot \frac{C_p^{-1}}{p} \cdot \frac{(l-1)p^{-3\Delta}}{1+(l-1)p^{-3\Delta}}+ p^{-2\Delta}\cdot \frac{C_p^{-1}}{p^2} \cdot \frac{(l-1)p^{-3\Delta}}{1+(l-1)p^{-3\Delta}} \right.\\
&\left.- \frac{C_p^{-1}}{p} \cdot \frac{(l-1)p^{-3\Delta}}{1+(l-1)p^{-3\Delta}} - \frac{C_p^{-1}}{p^2} \cdot \frac{(l-1)p^{-3\Delta}}{1+(l-1)p^{-3\Delta}} \right\},
\end{aligned}
\end{equation} 
where $C_p = 1+ p^{-1}+ p^{-2}$ is the normalizing factor for the local density at $s = 1$ for $S_3$ extensions, and $$g(3/l) = (1 + (l+1)l^{-6\Delta})\prod_{p\ne l, p\equiv 1 \text{ mod }l} (1 + (l-1) p^{-3\Delta}),$$
and the local factor at $l$ 
$$c_l = \sum_{(\sigma_l,\lambda_l)} \frac{\Disc(\sigma_l)\Disc(\lambda_l)^{3/l}}{\Disc(\sigma_l, \lambda_l)^{1/l}} \cdot \frac{g_{\lambda_l}(3/l)}{g(3/l)} \cdot A_{\sigma_l}.$$
Similarly, we can compute the constant for the secondary term
\begin{equation}\label{sedterm}
\begin{aligned}
\mathcal{C}_2 = &B \sum_{\rho} L_{\rho}^{5/6m} B_{\Sigma(\rho)}\cdot  g_{\Lambda(\rho)}(\frac{5}{2m}) \sum_{\eta} \mu(\eta)B_{\Sigma'(\eta)}\cdot  g_{\Lambda'(\eta)}(\frac{5}{2m})\\
= &B \cdot g_(\frac{5}{2l}) \cdot d_l\cdot \sum_{\substack{q,r,k,l \\ \forall p|qrkl\\ p\equiv 1 \text{ mod } l}} \left\{  \mu(k) \mu(l)(\frac{1}{q^{\Delta} r^{2\Delta}})^{5/6} \cdot \prod_{p|qk} \frac{(1+ p^{-1/3})^2}{K_p \cdot p} \right.\\ &\quad\quad\quad\quad\quad\quad\quad\quad\quad\quad\quad\quad \left.\cdot \prod_{p|rl}\frac{(1+ p^{-1/3}) }{K_p\cdot p^2} \cdot\prod_{p|qrkl} \frac{(l-1)p^{-5\Delta/2}}{1+ (l-1)p^{-5\Delta/2}}\right\}\\
= & B\cdot g(5/2l) \cdot d_l\cdot \prod_{p \equiv 1 \text{ mod }l } \left\{1 + p^{-5\Delta/6}\cdot \frac{(1+ p^{-1/3})^2}{K_p\cdot p} \cdot \frac{(l-1)p^{-5\Delta/2}}{1+(l-1)p^{-5\Delta/2}}\right. \\ & + p^{-5\Delta/3}\cdot \frac{1+ p^{-1/3}}{K_p\cdot p^2} \cdot \frac{(l-1)p^{-5\Delta/2}}{1+(l-1)p^{-5\Delta/2}}\\
& \left. - \frac{(1+ p^{-1/3})^2}{K_p\cdot p} \cdot \frac{(l-1)p^{-5\Delta/2}}{1+(l-1)p^{-5\Delta/2}} - \frac{1+ p^{-1/3}}{K_p\cdot p^2} \cdot \frac{(l-1)p^{-5\Delta/2}}{1+(l-1)p^{-5\Delta/2}}\right\},
\end{aligned}
\end{equation} 
where $$B =  (1+\sqrt{3}) \frac{4\zeta(1/3)}{5\Gamma(2/3)^3\zeta(5/3)},$$ is the constant for the secondary term for $S_3$ extensions, and $$K_p = \frac{(1-p^{-5/3})(1+p^{-1})}{1-p^{-1/3}},$$
is the normalizing factor for the local density at $s = 5/6$ for $S_3$ extensions, and
$$d_l = \sum_{(\sigma_l,\lambda_l)} \frac{\Disc(\sigma_l)^{5/6}\Disc(\lambda_l)^{5/2l}}{\Disc(\sigma_l, \lambda_l)^{5/6l}} \cdot \frac{g_{\lambda_l}(5/2l)}{g(5/2l)} \cdot B_{\sigma_l}.$$

%

Notice that we are counting continous homomorphisms from $G_{\mathbb{Q}}$ to $S_3\times C_l$ which are surjective onto the $S_3$ component up to an action of $\text{Aut}(C_l)$ on the $C_l$ component, therefore the true value for the constant of the main term is
$$C_1 = \frac{1}{l-1}\cdot (\mathcal{C}_1 - A),$$
and the value for the constant of the secondary term is
$$C_2 = \frac{1}{l-1}\cdot (\mathcal{C}_2 - B).$$
\subsection{The Amount of Power Saving}\label{sec48}
In this subsection, we are going to compute the amount of power saving error away from the secondary term in the order of $X^{5/6m}$ when $p>5$ and the amount of power saving from the main term for $p = 3, 5$. 

Recall that in section $4.5$, we have specified the exponent of $X$ in the error term to be the maximum value among (\ref{insedl}) and (\ref{inseds}), therefore the amount of power saving is
$$\frac{5}{6m} - \frac{2}{3m}  - a(\beta, \gamma) - \epsilon - (\frac{1}{3m}-a(\beta, \gamma)-\epsilon)\cdot \max_{i,j} \{ \frac{e(i,j) +1}{\delta(i,j)}\},$$
and 
$$\frac{5}{6m} - \frac{1}{m} -  (\frac{1}{3m}-a(\beta, \gamma)-\epsilon)\cdot \max_{i,j} \{ \frac{d(i,j) +1}{\delta(i,j)}\}.$$
Recall $a = b = 4/5$ is the proved dependency of error for $S_3$ extensions. For $p>7$, we can compute the amount of power saving is
$$ \delta = \frac{1}{6m}\cdot \min\{ \frac{10\Delta/3 -a -11/6}{1/6+ a+2\Delta/3} , \frac{5\Delta/3-b}{2+b+\Delta/3} \} -\epsilon = \frac{1}{6m} \cdot \frac{5\Delta/3-b}{2+b+\Delta/3} -\epsilon,$$
%
where for the second equality we use that $a = 4/5$ and $b = 4/5$. For $p = 7$, the amount of power saving is $\delta =23/(1254m) - \epsilon$. For $p = 5$, the amount of power saving is $\delta =322/(2061m) - \epsilon$. For $ p =3$, the amount of power saving is $\delta =24/(283m) - \epsilon$.

\section{Acknowledgement}
I am extremely grateful to my advisor Melanie Matchett Wood for many helpful discussions. I would like to thank Frank Thorne for helpful conversations, thank Manjul Bhargava, Takashi Taniguchi and Frank Thorne for providing a preprint of their work, and thank Robert Harron for helpful communications and references. I would also like to thank David Roberts, Takashi Taniguchi and Frank Thorne for suggestions on an earlier draft. This work is partially supported by National Science Foundation grant DMS-$1301690$ and Vilas Early Career Investigator Award.

\Addresses
\end{document}